\definecolor{myblue}{rgb}{0.09,0.32,0.44}
\newtheorem{thm}{Theorem}[section]
\newtheorem*{thm*}{Theorem}
\newtheorem{conj}[thm]{Conjecture}
\newtheorem{cor}[thm]{Corollary}
\newtheorem{exmpl}[thm]{Example}
\newtheorem{lem}[thm]{Lemma}
\newtheorem{prop}[thm]{Proposition}
\newtheorem{obs}[thm]{Observation}
\theoremstyle{remark}
\newtheorem{rem}[thm]{Remark}
\newtheorem*{rem*}{Remark}
\newtheorem*{rems*}{Remarks}
\newcommand\Cref[1]{{Corollary~\ref{#1}}}
\newcommand\Cjref[1]{{Conjecture~\ref{#1}}}
\newcommand\Lref[1]{{Lemma~\ref{#1}}}
\newcommand\Pref[1]{{Proposition~\ref{#1}}}
\newcommand\Rref[1]{{Remark~\ref{#1}}}
\newcommand\Tref[1]{{Theorem~\ref{#1}}}
\newcommand\Sref[1]{{Section~\ref{#1}}}
\newcommand{\Z}{\mathbb{Z}}
\newcommand{\set}[1]{\left\{#1\right\}}
\newcommand{\sub}{\subseteq}
\newcommand{\floor}[1]{\left\lfloor #1 \right\rfloor}
\newcommand{\ceil}[1]{\left\lceil #1 \right\rceil}
\newcommand{\suchthat}{\,\ifnum\currentgrouptype=16\middle\fi|\,}
\newcommand{\id}{\mathsf{id}}
\renewcommand{\Pr}{\mathbb{P}}
\newcommand{\ind}[1]{\mathbbm{1}_{#1}}
\newcommand{\range}{\operatorname{range}}
\newcommand{\E}{\mathbb{E}}
\newcommand{\eps}{\varepsilon}
\newcommand{\Cay}{\mathrm{Cay}}
\newcommand{\dist}{\operatorname{dist}}
\newcommand{\Reff}{\mathcal{R}_{\text{eff}}}
\newcommand{\supp}{\operatorname{supp}}
\def\moverlay{\mathpalette\mov@rlay}
\def\mov@rlay#1#2{\leavevmode\vtop{
   \baselineskip\z@skip \lineskiplimit-\maxdimen
   \ialign{\hfil$\m@th#1##$\hfil\cr#2\crcr}}}
\newcommand{\charfusion}[3][\mathord]{
    #1{\ifx#1\mathop\vphantom{#2}\fi
        \mathpalette\mov@rlay{#2\cr#3}
      }
    \ifx#1\mathop\expandafter\displaylimits\fi}
\title[Recurrence of stationary random walks on lamplighter groups]{Phase transition for recurrence of stationary random walks on lamplighter groups}
\author{Itai Benjamini}
\address{Department of Mathematics, The Weizmann Institute of Science, Rehovot}
\email{itai.benjamini@weizmann.ac.il}
\author{Guy Blachar}
\address{Department of Mathematics, The Weizmann Institute of Science, Rehovot}
\email{guy.blachar@gmail.com}
\author{Ariel Yadin}
\address{Department of Mathematics, Ben-Gurion University of the Negev, Beer-Sheva}
\email{yadina@bgu.ac.il}
\begin{document}

\begin{abstract}
  We introduce and study a class of random walks on lamplighter groups $H\wr G$, where $H$ is a nontrivial finitely generated group and $G$ is an infinite finitely generated group, called \textbf{stationary random walks}. At each step, the walk switches the lamp at its current position, moves in the base group with a drift towards the identity, and switches the lamp again at the new position. We show that when $G$ is virtually-$\Z$ and $H$ is finite, these walks exhibit a phase transition between recurrence and transience, while when~$G$ is not virtually-$\Z$ or $H$ is infinite, they are always transient. In the case $G=\Z$, we determine the exact critical parameter and provide a quantitative description of this phase transition.
\end{abstract}

\maketitle

\section{Introduction}

Consider the lamplighter group $F\wr\Z$, where $F$ is a nontrivial finite group. Elements of this group can be described as pairs $((L(i))_{i\in\Z},x)$, where $(L(i))_{i\in\Z}\in\bigoplus_{\Z}F$ and $x\in\Z$. One may think of $(L(i))_{i\in\Z}$ as a lamp configuration on~$\Z$, in which an $F$-valued lamp is placed at each point of~$\Z$, but only finitely many are allowed to be nontrivial. The element $x\in\Z$ represents the position of a lamplighter, who walks along the $\Z$-axis and can switch the lamps' values. Such groups play a central role in geometric group theory, particularly in the study of random walks on groups.

This paper studies a family of random walks $R_n$ on the group~$F\wr\Z$, which we refer to as \textbf{stationary random walks}. For a fixed parameter $\frac{1}{2}<p<1$, we define~$R_n$ to be a specific switch-walk-switch random walk on the group: at each step, the walk first refreshes the lamp at its current location on the $\Z$-axis (``switch''), then moves along the $\Z$-axis (``walk''), and then refreshes the lamp at its new location (``switch''). We assume the switches are uniform on $F$, while the walk on $\Z$ has a constant drift $p$ towards the origin.

Formally, the projection of $R_n$ on $\Z$, denoted $S_n$, is given by
\[
    \Pr(S_{n+1}=x+1\mid S_n=x) = \begin{cases} p, & x<0 \\ \frac{1}{2}, & x=0 \\ 1-p, & x>0 \end{cases}
\]
and
\[
    \Pr(S_{n+1}=x-1\mid S_n=x) = \begin{cases} 1-p, & x<0 \\ \frac{1}{2}, & x=0 \\ p, & x>0. \end{cases}
\]

The corresponding random walk $R_n$ on the lamplighter group is then defined as follows: Let $(S_n)_n$ denote the above random walk on $\Z$, and let $(U_n,V_n)_n$ be i.i.d.\ random variables uniformly distributed on $F$. Define $R_n = ((L_n(i))_{i \in \Z} , S_n)$, where $L_{n+1}$ is obtained from $L_n$ via
\begin{equation}
\label{eqn:L dfn}
L_{n+1}(i) =
\begin{cases}
L_n(i) & \textrm{ if } i \not\in \{ S_n , S_{n+1} \} , \\
U_{n+1} & \textrm{ if } i=S_n , \\
V_{n+1} & \textrm{ if } i=S_{n+1} .
\end{cases}
\end{equation}

We study the asymptotic behaviour of the walk depending on the value of the parameter $p$. In particular, regarding the recurrence or transience of $R_n$, we obtain the following phase transition:

\begin{thm}\label{thm:Z-phase}
  Let $R_n$ denote the stationary random walk on $F\wr\Z$ with parameter~$p$. Then $R_n$ is recurrent if $p\ge\frac{\left|F\right|^2}{\left|F\right|^2+1}$, and is transient if $p<\frac{\left|F\right|^2}{\left|F\right|^2+1}$.
\end{thm}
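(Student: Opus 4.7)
The plan is to reduce the recurrence question to an estimate on the range of the $\Z$-walk $S_n$ and then carry out an extreme-value analysis. First, observe that $R_n = e$ precisely when $S_n = 0$ and all lamps read $e_F$. Conditionally on the trajectory $(S_k)_{0 \leq k \leq n}$, the final value $L_n(i)$ at a visited site $i$ is determined by the latest step $k$ at which $i \in \set{S_{k-1}, S_k}$, and equals the corresponding $U_k$ or $V_k$. Since $S_{k-1} \neq S_k$, distinct visited sites always read from distinct (and hence independent) uniform random variables. Thus the lamps at visited sites are jointly i.i.d.\ uniform on $F$, yielding
\[
\Pr(R_n = e) = \E\!\left[\ind{S_n = 0} \cdot |F|^{-R_n^*}\right], \quad R_n^* := |\range(S_{[0,n]})| = M_n^+ + M_n^- + 1,
\]
where $M_n^+ := \max_{k \leq n} S_k$ and $M_n^- := -\min_{k \leq n} S_k$, using that a $\pm 1$ walk visits every integer between its minimum and maximum. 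Recurrence of $R_n$ is thus equivalent to divergence of $\sum_n \Pr(R_n = e)$.

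Second, I analyze the range. The projection $S_n$ is a positive recurrent birth-death chain on $\Z$ with stationary distribution $\pi(x) \propto \rho^{|x|}$ for $\rho := (1-p)/p < 1$. A gambler's-ruin computation shows that an excursion of $S_n$ from the origin reaches height $k$ with probability $\asymp \rho^k$, and in time $n$ the walk performs $\Theta(n)$ independent excursions. Standard extreme-value arguments then give that $M_n^+$ and $M_n^-$ each concentrate around $\log_{1/\rho}(n)$ with $O(1)$ Gumbel-type fluctuations having exponentially light tails on both sides. Writing $R_n^* = \tfrac{2 \log n}{\log(p/(1-p))} + Z_n$ with $Z_n$ tight, and combining with $\Pr(S_n = 0) \to \pi(0) > 0$, I expect the two-sided asymptotic
\[
\Pr(R_n = e) \asymp n^{-\alpha}, \quad \alpha := \frac{2 \log |F|}{\log(p/(1-p))}.
\]

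Finally, $\sum_n n^{-\alpha}$ diverges iff $\alpha \leq 1$, which rearranges exactly to $p \geq \frac{|F|^2}{|F|^2 + 1}$, giving the stated phase transition; the critical value lies in the recurrent regime since $\sum 1/n = \infty$. The main technical obstacle is the tight two-sided control of $\E[\ind{S_n = 0} \cdot |F|^{-R_n^*}]$. The upper bound (transience) follows from the concentration of $R_n^*$ from below, but the lower bound (recurrence) requires exhibiting a positive-probability family of ``short bridges'' — trajectories with $S_n = 0$ and $\max_{k \leq n} |S_k| \leq \log_{1/\rho}(n) + O(1)$ — which can be built by pasting $\Theta(n)$ excursions conditioned to stay within a window of size $\log_{1/\rho}(n)$, together with verifying that such a window traps enough bridge endpoints at $0$.
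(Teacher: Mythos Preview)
Your reduction to the range formula
\[
\Pr(R_n=e)=\E\bigl[\ind{S_n=0}\,|F|^{-R_n^*}\bigr],\qquad R_n^*=M_n^++M_n^-+1,
\]
is exactly the paper's Proposition~2.2, and the extreme-value heuristic $M_n^\pm\approx\log_{1/\rho}n$ leading to the exponent $\alpha=\dfrac{2\log|F|}{\log(p/(1-p))}$ is the right picture. So the strategy is sound.

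The substantive difference is in the indexing. The paper does \emph{not} work in real time $n$; it works along the subsequence $(\rho_k)_{k\ge0}$ of successive returns of $S$ to $0$, observing that recurrence of $R$ is equivalent to $\sum_k\Pr(R_{\rho_k}=\id)=\infty$. This single change dissolves the obstacle you flag at the end. At time $\rho_k$ the constraint $S=0$ is automatic, and, crucially, the excursions are i.i.d.: conditionally on the number $N_k^+$ of positive excursions among the first $k$, the maxima $M_k^+$ and $-M_k^-$ are \emph{independent}, each equal in law to the maximum of a known number of i.i.d.\ variables whose tail is computed exactly by a gambler's-ruin argument (Lemma~2.3). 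This turns the two-sided estimate into a single one-variable calculus lemma (Lemma~2.5), yielding $\Pr(R_{\rho_k}=\id)\asymp k^{-2\alpha'}$ with $\alpha'=\log|F|/\log\lambda$, sharp enough to capture the critical case. No ``short bridges'' construction is needed, and no joint control of $(\ind{S_n=0},M_n^+,M_n^-)$ in real time is required.

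Your route can in principle be pushed through, but note that even your upper bound is not quite as immediate as stated: to get the \emph{sharp} exponent $n^{-\alpha}$ (needed to locate $p_c$ exactly and to include the critical point in the recurrent phase) you must show that $M_n^+$ and $M_n^-$ are \emph{simultaneously} at least $\log_{1/\rho}n-O(1)$ with probability close to $1$, and for the lower bound that they are simultaneously at most $\log_{1/\rho}n+O(1)$ on an event of probability $\gtrsim 1$ that also forces $S_n=0$. Both are doable, but the excursion-count reindexing gives them for free.
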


We give a quantitative version of the above theorem by considering the local time of the walk. We denote the local time of $R_n$ at a point $a\in F\wr\Z$ until time~$n$~by
\[
    \xi(a,n) = \left|\set{0\le k\le n\suchthat R_k=a}\right|.
\]
For two sequences $(\alpha_n)_n,(\beta_n)_n$ of positive real numbers, we write $\alpha_n\lesssim \beta_n$ if there exists a constant $C>0$ such that $\alpha_n\le C\beta_n$ for all $n$, and we write $\alpha_n\simeq \beta_n$ if $\alpha_n\lesssim \beta_n$ and $\beta_n\lesssim \alpha_n$. We show:

\begin{thm}\label{thm:Z-local-time}
  Let $R_n$ denote the stationary random walk on $F\wr\Z$ with parameter~$p$. Then
  \begin{equation}\label{eq:local-time}
    \E[\xi(\id,n)] \simeq \begin{cases}1, & p<\frac{\left|F\right|^2}{\left|F\right|^2+1} \\ \log n, & p=\frac{\left|F\right|^2}{\left|F\right|^2+1} \\ n^{1-2\alpha}, & p>\frac{\left|F\right|^2}{\left|F\right|^2+1}\end{cases}
  \end{equation}
  where $\alpha=\frac{\log\left|F\right|}{\log\frac{p}{1-p}}$, and $\id$ denotes the identity element of $F\wr\Z$. (The implied constants of \eqref{eq:local-time} depend on $p$ and $\left|F\right|$.)
\end{thm}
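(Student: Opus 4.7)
My plan is to reduce the computation to a weighted estimate on the base walk $S_n$, exploiting the independence of the lamp refreshes at distinct sites, and then to analyse the resulting sum via an excursion decomposition of $S$.

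\textbf{Step 1 (Reduction).} Conditional on the path of $S$, the value $L_k(i)$ at each visited site $i$ is the $U$-- or $V$--refresh at the most recent step in which $i \in \{S_{j-1},S_j\}$; distinct sites correspond to distinct $U_j$'s and $V_j$'s, so these last-refresh values are i.i.d.\ uniform on~$F$. Assuming $R_0=\id$, unvisited sites retain $\id_F$, and so
\[
\Pr(R_k=\id) \;=\; \E\bigl[|F|^{-\rho_k}\ind{S_k=0}\bigr],
\]
where $\rho_k = M_k + |m_k| + 1$ is the range of $S$ (with $M_k,m_k$ its running max and min). Summing,
\[
\E[\xi(\id,n)] \;=\; \sum_{k=0}^n \E\bigl[|F|^{-\rho_k}\ind{S_k=0}\bigr].
\]

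\textbf{Step 2 (Excursion analysis and the main estimate).} The chain $S_n$ is a positive-recurrent birth-death chain on $\Z$ with stationary measure decaying geometrically at rate $r=(1-p)/p$. A standard gambler's-ruin computation gives that a single excursion from $0$ reaches height $m$ with probability $\simeq r^m$. The excursions are i.i.d., each positive or negative with probability $\tfrac12$ independently (by symmetry at $0$), and their lengths have finite mean with exponential tails, so the $j$-th return time $T_j$ to $0$ satisfies $T_j \simeq j$ with concentration. Conditioning on $T_j$ gives $\rho_{T_j}=M^{+}+M^{-}+1$, where $M^{\pm}$ is the maximum over the positive (resp.\ negative) excursions among the first $j$; crucially $M^{+}$ and $M^{-}$ are \emph{independent}. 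A direct tail integral shows that the maximum $M$ of $\simeq j$ i.i.d.\ variables with tail $r^m$ satisfies $\E[|F|^{-M}] \simeq j^{-\alpha}$, the dominant contribution coming from the typical scale $m^{\ast}=\log j/\log(p/(1-p))$, at which $|F|^{-m^{\ast}}=j^{-\alpha}$. Combining yields $\E[|F|^{-\rho_{T_j}}] \simeq j^{-2\alpha}$, and a concentration argument for $T_j$ transfers this to
\[
\E\bigl[|F|^{-\rho_k}\ind{S_k=0}\bigr] \;\simeq\; k^{-2\alpha} \qquad \text{for even } k
\]
(vanishing for odd $k$ by parity). Summing over $k \le n$,
\[
\E[\xi(\id,n)] \;\simeq\; \sum_{k \text{ even},\, 2 \le k \le n} k^{-2\alpha},
\]
which produces the three regimes of~\eqref{eq:local-time}: bounded when $2\alpha>1$ (i.e.\ $p<|F|^2/(|F|^2+1)$), logarithmic when $2\alpha=1$, and of order $n^{1-2\alpha}$ when $2\alpha<1$.

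\textbf{Main obstacle.} The hardest point will be the matching \emph{lower} bound in Step 2. The upper bound follows comparatively easily from the tail of $M_k$ together with a Cauchy--Schwarz argument using the symmetry $x\mapsto-x$ (which bounds $\E[|F|^{-(M_k+|m_k|)}]$ by $\E[|F|^{-2M_k}]$), but the lower bound requires controlling the joint law of $(M_k,|m_k|,\ind{S_k=0})$. The excursion decomposition cleanly turns the positive and negative contributions to the range into independent maxima, reducing the problem to the single-sided moment $\E[|F|^{-M}]$; what remains is to handle the $O(\sqrt{j})$ binomial fluctuations in the split of the first $j$ excursions into positive and negative ones, and to transfer the clean estimate at the return times $T_j$ to arbitrary times $k$.
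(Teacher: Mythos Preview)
Your proposal is essentially the paper's proof: Step~1 is exactly the conditional return formula (Proposition~\ref{prop:ret-prob-given-path}), and Step~2 is the route through Lemma~\ref{lem:max-ex}, Proposition~\ref{prop:ret-prob-given-nk}, Lemma~\ref{lem:calculus} and Corollary~\ref{cor:ret-prob} (excursion decomposition, independence of $M^{\pm}$ conditional on the binomial positive/negative split, the moment computation $\E[|F|^{-M}]\simeq j^{-\alpha}$, and Hoeffding for the split), followed by a Cram\'er-type bound on the return times (Proposition~\ref{prop:cramer}). The one organizational difference is that the paper never establishes a per-time estimate $\Pr(R_k=\id)\simeq k^{-2\alpha}$: instead it writes $\xi(\id,n)=\sum_{j:\,T_j\le n}\ind{\{R_{T_j}=\id\}}$, sandwiches $n$ between $T_{\lfloor n/2m\rfloor}$ and $T_{\lceil 2n/m\rceil}$, and sums $\Pr(R_{T_j}=\id)\simeq j^{-2\alpha}$ over $j$ directly --- slightly cleaner than your proposed transfer to a fixed time $k$, which would require controlling the range conditional on $\{T_j=k\}$, a step the theorem does not actually need.
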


It would also be interesting to estimate other quantitative aspects of stationary random walks, such as the expected range of $R_n$. We expect it to grow polynomially in $n$, with an exponent that depends on $p$ and equals $1$ when $p<\frac{\left|F\right|^2}{\left|F\right|^2+1}$. However, we were not able to establish concrete bounds.\medskip

We further study whether this phenomenon holds when replacing $\Z$ by another finitely generated group $G$, and $F$ by an arbitrary finitely generated group $H$. In this setting, we define the stationary random walk on~$H\wr G$ analogously, where the walk on $G$ is the \textit{$\lambda$-homesick random walk}. Homesick random walks are a notion of random walks on groups with drift towards the identity (or towards the root of a rooted graph), where the parameter $\lambda\ge 1$ controls the drift of the walk -- $\lambda=1$ corresponds to the simple random walk, and the larger~$\lambda$ is, the stronger the drift (see precise definition in \Sref{sec:homesick}). For $G=\Z$ with the standard generating set $S=\set{\pm 1}$, the $\lambda$-homesick random walk reduces to the biased walk on $\Z$ with constant drift $p=\frac{\lambda}{\lambda+1}$ towards $0$, as above.

We show that the above phase transition extends to the case where the group $G$ is virtually-$\Z$, i.e.:

\begin{thm}\label{thm:virt-Z}
  Let $G$ be a virtually-$\Z$ group, and let $H$ be a nontrivial finite group. Then there exists $1<\lambda_c<\infty$ such that the $\lambda$-stationary random walk on $F\wr G$ is transient if $\lambda<\lambda_c$, and recurrent if $\lambda>\lambda_c$. Moreover, the value of $\lambda_c$ depends on~$H$ only through $|H|$.
\end{thm}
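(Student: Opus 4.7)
My plan is to reduce to \Tref{thm:Z-phase} via a finite-index subgroup argument. Let $G'\leq G$ be a finite-index normal subgroup with $G'\cong\Z$ (which exists by taking the core of any finite-index $\cong\Z$ subgroup), set $k:=[G:G']$, and pick coset representatives $c_1=e,c_2,\ldots,c_k$ for $G/G'$. The decomposition $G=\bigsqcup_i c_iG'$ induces a $G'$-equivariant bijection between $H^G$ and $(H^k)^{G'}$ (after absorbing, if needed, the conjugation automorphisms $h\mapsto c_i^{-1}hc_i$ into coordinate reparametrizations of $\Z$), identifying the index-$k$ subgroup $H^G\rtimes G'$ of $H\wr G$ with $H^k\wr\Z$. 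Setting $\tau_0=0$ and $\tau_{i+1}=\inf\{n>\tau_i : S_n\in G'\}$ for the $\lambda$-homesick projection $S_n$ of $R_n$ to $G$ (these are a.s.\ finite since $S_n$ is recurrent on $G$ for $\lambda\geq1$), the induced walk $\tilde{R}_i:=R_{\tau_i}$ lives in this subgroup, and recurrence of $R_n$ at the identity is equivalent to recurrence of $\tilde{R}_i$ at the identity.

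A standard excursion analysis, exploiting the quasi-isometry of $G$ with $\Z$, shows that the projection $\tilde{S}_i=S_{\tau_i}$ on $G'\cong\Z$ inherits a drift $p'=p'(\lambda,G,G')\in[\tfrac12,1)$ toward the origin, with excursion heights having geometric-like tails of parameter $p'$. The parameter $p'$ depends continuously and monotonically on $\lambda$, does not depend on $H$, equals $\tfrac12$ at $\lambda=1$, and tends to $1$ as $\lambda\to\infty$. The key comparison step asserts that the induced walk $\tilde{R}_i$ is recurrence/transience equivalent to the genuine stationary random walk on $H^k\wr\Z$ with parameter $p'$. Structurally, $\tilde{R}_i$ differs in that during one excursion of $S_n$, the walk $R_n$ refreshes lamps at \emph{every} visited position of $G$, not only at the endpoints; however, since each excursion has exponential tails and always includes both $\tilde{S}_i$ and $\tilde{S}_{i+1}$, a coupling or Dirichlet-form comparison suffices to bound return probabilities up to multiplicative constants.

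Applying \Tref{thm:Z-phase} to $H^k\wr\Z$ yields critical parameter $p'_c=|H|^{2k}/(|H|^{2k}+1)$. Inverting the strictly increasing continuous map $\lambda\mapsto p'(\lambda)$ at $p'_c$ produces the desired $\lambda_c\in(1,\infty)$. Since $p'_c$ and the map $\lambda\mapsto p'(\lambda)$ both depend on $H$ only through $|H|$ (the former explicitly, the latter not at all), the critical value $\lambda_c$ also depends on $H$ only through $|H|$.

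The main obstacle is the comparison step above. The lamp refreshes of $\tilde{R}_i$ within a single excursion are correlated, unlike the i.i.d.\ switches of the switch-walk-switch mechanism, so one must show these correlations do not affect the recurrence/transience threshold. A natural strategy is a monotone coupling that dominates the extra refreshes by independent ones (using that uniform refreshes preserve uniform measure on $H$), combined with range estimates for $S_n$ on $G$ transferred from $\Z$ via quasi-isometry and the exponential tails of excursion heights. A secondary technical point is verifying that the map $\lambda\mapsto p'(\lambda)$ is indeed strictly monotone and satisfies the claimed boundary behavior; this should follow from explicit renewal computations once the Cayley structure of $G$ is fixed.
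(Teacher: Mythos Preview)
Your proposal has a genuine gap at exactly the point you flag as ``the main obstacle,'' and the difficulty is structural rather than technical. The induced base walk $\tilde S_i=S_{\tau_i}$ on $G'\cong\Z$ is \emph{not} a nearest-neighbour biased walk governed by a single parameter $p'$: it jumps by arbitrary (exponentially-tailed) amounts, and its law from a point $g'\in G'$ depends on the local Cayley geometry of $G$ near $g'$, not just on the $\Z$-coordinate. So there is no well-defined map $\lambda\mapsto p'(\lambda)$ to invert, and \Tref{thm:Z-phase} has nothing to act on. Worse, during one $G'$-excursion the lamp process refreshes a \emph{random} collection of positions whose cardinality is strongly correlated with the displacement $\tilde S_{i+1}-\tilde S_i$; this destroys the switch--walk--switch structure, and neither a monotone coupling nor a Dirichlet-form bound will recover it without independently controlling $|\range(Z_{\rho_k})|$. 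But once you control the range, the detour through $H^k\wr\Z$ is superfluous.

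The paper's proof works directly on $G$ and avoids all of this. For uniform $\mu$ one has $\Pr(R_{\rho_k}=\id\mid Z_0,\dots,Z_{\rho_k})=|H|^{-|\range(Z_{\rho_k})|}$ (\Pref{prop:ret-prob-gen}), so the whole question reduces to two-sided estimates on $|\range(Z_{\rho_k})|$ for the homesick walk on $\Cay(G,S)$. These come from effective-resistance bounds (\Pref{prop:homesick-range-up}) and a covering argument (\Pref{prop:homesick-range-down}); linear growth of balls in a virtually-$\Z$ group then gives $|\range(Z_{\rho_k})|\asymp \tfrac{\log k}{\log\lambda}$, hence transience for small $\lambda$ and recurrence for large $\lambda$. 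A Rayleigh monotonicity argument (\Pref{prop:monotonicity}) yields a single critical $\lambda_c$, and a separate network comparison (\Pref{prop:independent}) shows dependence on $H$ only through $|H|$. No reduction to $G=\Z$ is needed.
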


In fact, the theorem also holds when the switches are made according to some finitely supported, symmetric and non-degenerate random walk, and in this case the value of $\lambda_c$ does not change. We conjecture that the critical behaviour is like the case $G=\Z$:

\begin{conj}\label{conj:conj}
  If $G$ is virtually-$\Z$ and $H$ is a nontrivial finite group, then the $\lambda$-stationary random walk on $H\wr G$ is recurrent also for $\lambda=\lambda_c$.
\end{conj}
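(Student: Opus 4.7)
The plan is to transfer the critical-case estimate $\E[\xi(\id,n)]\simeq\log n$ of \Tref{thm:Z-local-time} to the virtually-$\Z$ setting by a renewal reduction. Fix a finite-index normal subgroup $N\trianglelefteq G$ with $N\cong\Z$; write $d=[G:N]$ and $Q=G/N$, and let $\pi\colon G\to Q$ be the projection. Let $0=T_0<T_1<T_2<\cdots$ be the successive hitting times of the coset $N=\pi^{-1}(\id_Q)$ by the base walk $S_n$. Since the $\lambda$-homesick walk is positive recurrent on the finite coset graph $Q$, the excursion lengths $T_{j+1}-T_j$ have exponential tails uniformly in the starting coset, and the induced process $\widetilde S_j=S_{T_j}\in N$ is a positive-recurrent biased walk on $N\cong\Z$ whose drift depends smoothly on $\lambda$.

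The main identity, obtained by conditioning on the trajectory $(S_k)_{k\le n}$, is
\[
\Pr(R_n=\id\mid(S_k)_{k\le n}) = \ind{S_n=\id_G}\cdot|H|^{-|\range_n|},
\]
since the switches at each step are i.i.d.\ uniform on $H$, so the last switch at each visited position yields an independent uniform $H$-value. Summing gives
\[
\E[\xi(\id,n)]=\sum_{k=0}^n\E\bigl[\ind{S_k=\id_G}|H|^{-|\range_k|}\bigr],
\]
and the conjecture reduces to showing that this sum diverges at $\lambda=\lambda_c$.

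Next, I would relate $|\range_n|$ to the range $\widetilde R_j=\set{\widetilde S_0,\ldots,\widetilde S_j}$ of the induced walk on $N$. Ergodicity of the coset process on $Q$ together with tightness of excursion lengths gives $|\range_n|=d\cdot|\widetilde R_{J(n)}|\bigl(1+o(1)\bigr)$, where $J(n)=\max\set{j:T_j\le n}$ grows linearly in $n$. Substituting, one obtains a lower bound of the form
\[
\E[\xi(\id,n)]\gtrsim\sum_{j\le cn}\E\bigl[\ind{\widetilde S_j=0_N}(|H|^d)^{-|\widetilde R_j|}\bigr],
\]
which is the direct analogue, for the induced walk on $N\cong\Z$ with effective lamp set of size $|H|^d$, of the quantity controlled by \Tref{thm:Z-local-time}. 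Invoking the $\Z$ case with $|H|$ replaced by $|H|^d$ (and the induced drift in place of $p$) then yields the required $\log n$ lower bound on $\E[\xi(\id,n)]$, provided $\lambda=\lambda_c$ corresponds exactly to criticality of the induced walk. The latter identification can be pinned down using that $\lambda_c$ is characterised as the unique recurrence/transience threshold in \Tref{thm:virt-Z}.

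The main obstacle is controlling the exponential factor: a multiplicative error in $|\range_n|\approx d\cdot|\widetilde R_{J(n)}|$ translates into a polynomial factor in $|H|^{-|\range_n|}$ only if the concentration is sharp enough, so one needs a quantitative law-of-large-numbers for $|\range_n|$ together with a truncation argument that discards atypical excursion patterns. A related difficulty is that the induced walk $(\widetilde S_j)_j$ is generally not a nearest-neighbour $\lambda'$-homesick walk on $\Z$, so one must either extend \Tref{thm:Z-local-time} to a broader class of biased finite-range walks on $\Z$ with an arbitrary positive-drift transition kernel, or couple $(\widetilde S_j)_j$ to such a nearest-neighbour walk with matching critical threshold.
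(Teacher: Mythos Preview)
The statement you are attempting is \emph{Conjecture}~\ref{conj:conj}; the paper explicitly leaves it open and provides no proof, so there is nothing in the paper to compare your argument against. Your outline is a plausible reduction strategy, and you correctly identify the key identity (essentially \Pref{prop:ret-prob-given-path} and \Pref{prop:ret-prob-gen} for uniform~$\mu$). However, the two obstacles you flag at the end are genuine gaps, not technicalities, and as written the argument does not close them.

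The decisive issue is precision in the exponent. At criticality one expects $\Pr(R_{\rho_k}=\id)\simeq 1/k$, so the series $\sum_k\Pr(R_{\rho_k}=\id)$ diverges only logarithmically. Since $|\range_{\rho_k}|$ is of order $\log k$ and sits in the exponent of $|H|^{-|\range_{\rho_k}|}$, a multiplicative $(1+o(1))$ comparison $|\range_n|\approx d\,|\widetilde R_{J(n)}|$ is far too coarse: an additive error of $\varepsilon\log k$ in the range contributes a factor $k^{-\varepsilon\log|H|}$ to $\Pr(R_{\rho_k}=\id)$, turning $\sum 1/k$ into a convergent series for any fixed $\varepsilon>0$. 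You would need the error $|\range_{\rho_k}|-d\,|\widetilde R_{J(\rho_k)}|$ to be $O(1)$ (or at least to have sufficiently light tails that its exponential moment is bounded), and there is no evident reason this holds: the sets of sites visited in the different cosets of~$N$ will typically differ from one another near the extremes of the range, and those boundary discrepancies fluctuate. Note that away from criticality a $(1+o(1))$ comparison \emph{would} suffice to transfer recurrence and transience, so your proposed identification of $\lambda_c$ with the induced threshold via \Tref{thm:virt-Z} is not circular; but that identification does not by itself help at $\lambda=\lambda_c$, where the whole difficulty lies.

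The second gap is equally real. The induced chain $(\widetilde S_j)$ on $N\cong\Z$ is a finite-range biased walk whose transition kernel depends on the coset geometry and on~$\lambda$ in a complicated way; it is not a $\lambda'$-homesick walk for any $\lambda'$. The critical-case estimate of \Tref{thm:Z-local-time} rests on the explicit Gambler's-ruin formula of \Lref{lem:max-ex} and the sharp asymptotics of \Lref{lem:calculus}, both specific to the nearest-neighbour homesick walk. Extending those computations to a general finite-range biased walk on~$\Z$ with the required $\simeq 1/k$ precision is itself a substantial problem, and your proposal does not indicate how to do it.
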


Finally, we show that the phase transition does not occur in any other case:

\begin{thm}\label{thm:no-transition}
  Let $G$ and $H$ be finitely generated groups, where $G$ is infinite and~$H$ is nontrivial. If $G$ is not virtually-$\Z$, or if $H$ is infinite (and the switches are made according to some finitely supported, symmetric and non-degenerate random walk), then the $\lambda$-stationary random walk on $H\wr G$ is transient for every $\lambda\ge 1$.
\end{thm}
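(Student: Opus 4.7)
The plan is to show $\sum_n \Pr(R_n = \id) < \infty$ by analyzing the lamp configuration conditional on the base path. The key observation is that, given the path $S_0, \ldots, S_n$, the lamp at each visited position $x$ is the product of the independent switches applied at $x$, and for distinct positions these come from disjoint sets of i.i.d.\ variables; thus, writing $V_n = \{S_0, \ldots, S_n\}$ for the range and $k_x$ for the number of switches at $x$ up to time $n$,
\[
\Pr(R_n = \id) = \E\bigl[ \ind{S_n = e_G} \, \textstyle\prod_{x \in V_n} q_{k_x} \bigr],
\]
where $q_k = 1/|H|$ in the finite-$H$ uniform-switch case (the lamp at $x$ coincides with its last switch) and $q_k = \mu^{*k}(e_H)$ in the generalised case. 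In either case $q_k$ is small once $k$ is large, by mixing (for finite $H$) or by on-diagonal heat-kernel decay (for infinite $H$).

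If the $\lambda$-homesick walk on $G$ is transient, then $\sum_n \Pr(S_n = e_G) < \infty$ and there is nothing to do; so the substantive regime is when the base walk is recurrent. For $\lambda > 1$ this means positive recurrent (automatic whenever $\sum_g \lambda^{-|g|} < \infty$), with stationary distribution $\pi(g) \propto \lambda^{-|g|}$. A standard excursion decomposition together with Chernoff concentration shows that, with high probability, every $x$ satisfying $n\pi(x) \gg 1$---i.e.\ $|x| \lesssim r_n := \log n / \log \lambda$---is visited at least $c \, n\pi(x)$ times, so that $V_n \supseteq B(r_n)$ and $k_x \gtrsim n \lambda^{-|x|}$ uniformly on this ball.

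In the case $G$ not virtually-$\Z$ and $H$ finite, the classification of groups of linear growth (Bass, Gromov) gives $|B(r)| \gtrsim r^2$, hence $|V_n| \gtrsim (\log n)^2$, and the lamp product is at most $|H|^{-c (\log n)^2} = \exp(-c (\log n)^2)$, which is summable. In the case $H$ infinite, the heat-kernel bound $q_k = \mu^{*k}(e_H) \lesssim k^{-1/2}$ (valid for every infinite finitely generated $H$ with a finitely supported symmetric non-degenerate walk, by Varopoulos-type estimates) combines with the visit counts $k_x \simeq n \lambda^{-|x|}$ to give, even in the worst case $G = \Z$,
\[
\sum_{|x| \le r_n} \log q_{k_x} \lesssim -\sum_{|x| \le r_n} \tfrac{1}{2}\bigl( \log n - |x| \log \lambda \bigr) \simeq -(\log n)^2,
\]
so the product is again $\exp(-c (\log n)^2)$ and summability follows.

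The main obstacle is making the visit-count lower bound rigorous in the sense actually needed: what we really want is an upper bound on $\E\bigl[\textstyle\prod_x q_{k_x}\bigr]$, not merely a pointwise bound on each factor or an expectation estimate on $|V_n|$ alone. A natural route is to use the excursion decomposition from the identity---each excursion has geometrically decaying height with parameter $\lambda^{-1}$---together with a Chernoff bound on the number of excursions reaching distance $r$, to derive a high-probability lower bound on $k_x$ uniform over $|x| \le r_n$ that can be inserted inside the product and expectation. A subtlety in the $H$-infinite, $G$ virtually-$\Z$ regime is that $q_k$ decays only polynomially, so summability is genuinely a collective property of many lamps---no single vertex suffices---which is precisely what makes the ring of visited positions around the identity do the work.
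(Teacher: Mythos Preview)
Your high-level strategy matches the paper's: condition on the base path, use independence of lamps across visited sites, and bound the return probability by a product over visited sites. The paper also splits into the same two cases and, for $G$ not virtually-$\Z$ with $H$ finite, runs essentially your argument---superlinear ball growth (via Gromov and Bass--Guivarc'h) forces the range to have size $\gtrsim (\log k)^{3/2}$, giving a summable $|H|^{-c(\log k)^{3/2}}$ bound.

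Two differences are worth flagging. First, the paper works exclusively at the excursion return times $\rho_k$ (the $k$-th return of the base walk to $\id_G$) rather than at all times $n$. This cleans up the bookkeeping: every visited site has been entered and exited a well-defined number of times, the ``last switch'' description of the lamp is exact, and the concentration input becomes a lower bound on $|\range(Z_{\rho_k})|$ via independent excursions (the paper's \Pref{prop:homesick-range-down}) rather than a statement about occupation times at a fixed $n$.

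Second, and more substantively, for the $H$ infinite case the paper's argument is much simpler than yours and sidesteps the ``main obstacle'' you identify. Instead of controlling visit counts on a growing ball of radius $\sim \log n / \log\lambda$, the paper uses only the fixed finite set $\overline{S} = S \cup \{\id_G\}$. The local time at $\id_G$ by time $\rho_k$ is exactly $k$, and for each $s \in S$ the local time at $s$ is at least the number of excursions whose first step is to $s$, which is $\Bin(k, 1/|S|)$. A single Chernoff bound gives $\xi_G(g,\rho_k) \gtrsim k$ for all $g \in \overline{S}$ simultaneously with exponentially small failure probability, and the heat-kernel bound $\mu^{*2t}(\id_H) \lesssim t^{-1/2}$ then yields
\[
\Pr(R_{\rho_k} = \id) \lesssim k^{-|\overline{S}|/2} + e^{-ck}.
\]
Since $G$ is infinite, $|S| \ge 2$, so $|\overline{S}| \ge 3$ and the sum converges. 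No uniform control over a growing family of sites is needed---three lamps already suffice. Your route via the full ball would also work once the excursion-based concentration is carried out, but it is more labour for the same conclusion.
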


\begin{rem}
  The notion of stationary random walks extends naturally to lamplighter graphs $\Gamma' \wr \Gamma$. We discuss this in \Sref{sec:graphs}.
\end{rem}

\begin{rem}
  Similar phenomena to those described above also hold for the homesick random walks themselves. Lyons \cite{Lyons95} showed that a $\lambda$-homesick random walk $R_n$ on a Cayley graph $\Cay(G,S)$ is transient if $1\le\lambda<\rho(G,S)\coloneqq\lim_{n\to\infty}\left|S^n\right|^{1/n}$, and is recurrent if $\lambda>\rho(G,S)$. Furthermore, Lyons--Pemantle--Peres~\cite{LyonsPemantlePeres96} and Revelle~\cite{Revelle01} studied the $\lambda$-homesick random walk on the classical lamplighter group $(\Z/m\Z)\wr\Z$, showing that when the walk is transient it has positive speed (even though the speed of the simple random walk on $(\Z/m\Z)\wr\Z$ is zero).

  In contrast, the bias of the stationary random walks on $F\wr G$ is not directed towards a specific vertex of the Cayley graph, but depends only on the base group~$G$. Moreover, the recurrence--transience phase transition occurs only when $G$ is one-dimensional, whereas Lyons' result applies to any group.
\end{rem}

\subsection*{Main ideas and outline of the paper}

Consider the $\lambda$-stationary random walk~$R_n$ on $H\wr G$, where $H$ is a finite group, and the switch measure is uniform on $H$. The phase transition between transience and recurrence stems from the fact that conditioned on the path of the projection $Z_n$ of $R_n$ on~$G$, the values of the lamps along the path are independent and uniform on $H$. Conditioning further on $Z_n=\id_G$, we have
\begin{equation}\label{eq:ret-prob-idea}
  \Pr(R_n=\id\suchthat Z_1, ..., Z_{n-1}, Z_n=\id_G) = \left|H\right|^{-\left|\range(Z_n)\right|},
\end{equation}
where $\range(Z_n)=\set{Z_0,\dots,Z_n}$.

In general, when the random walk $Z_n$ is recurrent, we expect that $\range(Z_n)$ will be close to a ball of radius $C\log n$ in the Cayley graph of $G$ (where $C$ depends on $\lambda$). When $G$ is virtually-$\Z$, the size of a ball in its Cayley graph is linear in its radius. The constant $C$ decreases as $\lambda$ grows, so when $\lambda$ is sufficiently close to $1$ the probabilities \eqref{eq:ret-prob-idea} will be summable, whereas when $\lambda$ is sufficiently large they will not be summable. This gives the claimed phase transition. When $G$ is not virtually-$\Z$, the size of balls in the Cayley graph grows at least quadratically in the radius, and thus \eqref{eq:ret-prob-idea} is summable for all $\lambda>1$, so the walk is transient for every value of $\lambda>1$. When $H$ is infinite, the probability that a given lamp is $\id_H$ tends to $0$ as $n$ tends to $\infty$, and the return probabilities of $R_n$ are again summable.

When $G=\Z$ and $S=\set{\pm 1}$ is the standard generating set, we are able to give precise asymptotic estimates on the range of the projection to $\Z$. This is done in \Sref{sec:Z-case}, where we prove \Tref{thm:Z-phase}, and in \Sref{sec:Z-case-local-time}, where we analyze the local time and prove \Tref{thm:Z-local-time}.

In \Sref{sec:homesick} we recall the definition of homesick random walks on rooted graphs, and prove two concentration results on their range, showing that they typically stay close to a ball of radius $C\log n$ (these bounds are not sharp, but sufficient for our applications). In \Sref{sec:general} we present the definition of stationary random walks on arbitrary lamplighter groups and establish their basic properties. In \Sref{sec:proofs} we combine the results of \Sref{sec:homesick} and \Sref{sec:general} to prove \Tref{thm:virt-Z} and \Tref{thm:no-transition}. Finally, in \Sref{sec:graphs} we discuss the generalization of stationary random walks to lamplighter graphs.

\section{\texorpdfstring{Recurrence and transience over $\Z$}{Recurrence and transience over Z}}\label{sec:Z-case}

In this section, we analyze the stationary random walk $R_n$ on $F\wr\Z$ for a given parameter $\frac{1}{2}<p<1$. As in the Introduction, we write $R_n=((L_n(i))_i,S_n)$, where $(L_n(i))_i\in\bigoplus_{\Z}F$ denotes the lamp configuration of $R_n$, and $S_n$ is a random walk on $\Z$ with drift $p$ towards the origin.

We denote by $\rho_0,\rho_1,\dots$ the successive times at which $S_n$ hits $0$. In other words, $\rho_0=0$, and for any $k\ge 1$ we write
\[
    \rho_k = \inf\set{n>\rho_{k-1}\suchthat S_n=0}.
\]
We remark that $\rho_1,\rho_2,\dots$ are almost surely finite, since the random walk $S_n$ is recurrent on $\Z$. We call each segment $R_{\rho_{k-1}},R_{\rho_{k-1}+1},\dots,R_{\rho_k}$ an \textbf{excursion}.

\begin{rem}\label{rem:rec-tran}
    $R_n$ is recurrent if and only if $\sum_{k=1}^{\infty}\Pr(R_{\rho_k}=\id)=\infty$. Indeed, for every $n$ with $R_n=\id$ we must have $S_n=0$, showing that the number of visits of the random walk $R_n$ to $\id$ is
    \[
        \sum_{n=1}^{\infty}\ind{\set{R_n=\id}} = \sum_{k=1}^{\infty}\ind{\set{R_{\rho_k}=\id}}.
    \]
\end{rem}

We therefore focus on studying the behaviour of the random walk $R_n$ at the sequence of times $(\rho_k)_k$.\medskip

For any $k\ge 1$, we write
\[
    M_k^+ = \max_{0\le j\le\rho_k} S_j
\]
and
\[
    M_k^- = \min_{0\le j\le \rho_k} S_j.
\]

\begin{prop}\label{prop:ret-prob-given-path}
  For any $k\ge 1$,
  \[
    \Pr(R_{\rho_k}=\id\mid S_0,S_1,\dots,S_{\rho_k}) = \left|F\right|^{-(M_k^+-M_k^-+1)}
  \]
\end{prop}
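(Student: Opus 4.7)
The event $R_{\rho_k}=\id$ decomposes into two conditions: $S_{\rho_k}=0$ (which is already built into the conditioning, as it determines when $\rho_k$ occurs) and $L_{\rho_k}(i)=\id_F$ for every $i\in\Z$. My plan is to compute the conditional probability of the latter by analyzing, lamp-by-lamp, how each coordinate of the configuration is determined by the switching variables $U_n,V_n$.

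The first step is to identify which lamps are even in play. Since $(S_n)$ is a nearest-neighbor walk on $\Z$ starting at $0$, the range $\{S_0,\dots,S_{\rho_k}\}$ is exactly the interval $\{M_k^-,M_k^-+1,\dots,M_k^+\}$, of cardinality $M_k^+ - M_k^- + 1$. By \eqref{eqn:L dfn}, a lamp $L_n(i)$ can only change when $i\in\{S_{n-1},S_n\}$; therefore lamps at positions outside this interval remain fixed at their initial value $\id_F$ throughout, and contribute nothing to the event in question.

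The second step is to control the lamps inside the range. Fix $i\in\{M_k^-,\dots,M_k^+\}$ and let $\tau_i$ denote the last time $n\le\rho_k$ at which $i\in\{S_{n-1},S_n\}$; such a $\tau_i$ exists by definition of the range. By \eqref{eqn:L dfn}, the value $L_{\rho_k}(i)$ equals either $U_{\tau_i}$ (if $S_{\tau_i - 1}=i$) or $V_{\tau_i}$ (if $S_{\tau_i}=i$); which of the two is fully determined by the conditioning on the $S$-path. In either case, this variable is uniform on $F$. The crux is to argue independence across $i$: the last-refresh variables $\{U_{\tau_i}\text{ or }V_{\tau_i}\}_i$ are \emph{all distinct} elements of the i.i.d.\ family $\{U_n,V_n\}_{n\ge 1}$. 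Indeed, if $\tau_i=\tau_j$ for some $i\ne j$, then the walk steps at time $\tau_i$ cover both sites, forcing $\{i,j\}=\{S_{\tau_i-1},S_{\tau_i}\}$, in which case one lamp's last refresh uses $U_{\tau_i}$ and the other uses $V_{\tau_i}$, which are independent; and if $\tau_i\ne\tau_j$ the refreshes use variables from different time steps.

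Given this independence, conditionally on $S_0,\dots,S_{\rho_k}$ the values $\{L_{\rho_k}(i):M_k^-\le i\le M_k^+\}$ are i.i.d.\ uniform on $F$, while the remaining lamps equal $\id_F$ deterministically. The probability that all of them equal $\id_F$ is therefore $|F|^{-(M_k^+-M_k^-+1)}$, which is the claim. The only part requiring genuine care is the independence step of the third paragraph; the bookkeeping that $U_n$ and $V_n$ refresh distinct positions at the same time (since $S_{n-1}\ne S_n$) is what makes the product structure clean.
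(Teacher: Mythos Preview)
Your proof is correct and follows essentially the same approach as the paper: both identify the range as the interval $[M_k^-,M_k^+]$, pin down each lamp's final value as one specific member of the i.i.d.\ family $\{U_n,V_n\}$ determined by the last refresh, and then argue these are distinct (hence independent) conditionally on the $S$-path. The only cosmetic difference is that the paper indexes by the last \emph{visit} time $n_i=\max\{t\le\rho_k:S_t=i\}$ and special-cases $i=0$, whereas you index by the last \emph{step} time $\tau_i$ and handle the potential collision $\tau_i=\tau_j$ via the $U$/$V$ split; these are equivalent bookkeepings.
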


\begin{proof}
  We first recall that $S_{\rho_k}=0$, so $R_{\rho_k}=\id$ if and only if $L_{\rho_k}(i)=\id_F$ for all $i\in\Z$. By the definition of $R_n$, if $i\notin[M_k^-,M_k^+]$ then $L_n(i)=\id_F$.

  We claim that, conditional on $S_0, \ldots, S_{\rho_k}$, the variables $(L_{\rho_k}(i))_{i \in [M_k^-, M_k^+]}$ are independent and uniform on $F$, which immediately implies the proposition. Let us prove this (very intuitive, but subtle) claim.

  For each $i \in [M_k^-,M_k^+]$ we let $1 \leq n_i \leq \rho_k$ be the last time the random walk $S_n$ is at $i$, that is,
  \[
    n_i = \max \set{ 1 \leq t \leq \rho_k \suchthat  S_{t} = i } .
  \]
  We note that $0< n_i < \rho_k$ when $i \neq 0$, and by \eqref{eqn:L dfn} we have that $L_{\rho_k}(i) = U_{n_i+1}$ for all $0 \neq i \in [M_k^-,M_k^+]$. The lamp at $i=0$ is slightly special: here $n_0 = \rho_k$, and so $L_{\rho_k}(0) = V_{\rho_k}$.

  Therefore $R_{\rho_k} = \id$ if and only if $L_{\rho_k}(i) = \id_F$ for all $i \in [M_k^-,M_k^+]$, which is in turn is equivalent to
  \[
    \forall \ i \in [M_k^-,M_k^+] \setminus \{ 0\} \ , \ U_{n_i+1} = \id_F \qquad \textrm{and} \qquad
V_{\rho_k} = \id_F .
  \]
  Conditioned on $S_0, \ldots, S_{\rho_k}$, this last event has probability $\left|F\right|^{-|[M_k^-, M_k^+]| }$, proving the proposition.
\end{proof}

We next consider the distributions of $M_k^+$ and $M_k^-$. Since the random walks $(S_n)_n$ and $(-S_n)_n$ have the same distribution, also $M_k^+$ and $-M_k^-$ have the same distribution. We therefore focus on $M_k^+$. By definition, $M_k^+$ is the maximum of all positive excursions of $S_n$ until time $\rho_k$. For a single positive excursion, its maximum has the following distribution:

\begin{lem}\label{lem:max-ex}
  For any integer $x\ge 0$,
  \[
    \Pr(M_1^+\le x\mid S_1=1) = 1-\frac{\lambda-1}{\lambda^{x+1}-1},
  \]
  where $\lambda=\frac{p}{1-p}$.
\end{lem}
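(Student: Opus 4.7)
The plan is to reduce this to a standard gambler's ruin computation. Observe that conditional on $S_1 = 1$, the event $\{M_1^+ \le x\}$ is the event that the walk $S_n$, starting from position $1$, returns to $0$ before ever reaching $x+1$. Between the times $1$ and $\rho_1$, the walk stays in $\{0, 1, 2, \dots\}$ and at each strictly positive position moves to the left with probability $p$ and to the right with probability $1-p$, by the definition of the transition probabilities in the introduction. Thus the computation reduces entirely to analyzing a biased walk on $\{0, 1, \dots, x+1\}$ absorbed at the two endpoints.

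Concretely, I would let $h(y) = \Pr(\text{biased walk starting at } y \text{ reaches } x+1 \text{ before } 0)$, for $0 \le y \le x+1$, and note the goal is to show $h(1) = \frac{\lambda - 1}{\lambda^{x+1} - 1}$, since then
\[
\Pr(M_1^+ \le x \mid S_1 = 1) = 1 - h(1) = 1 - \frac{\lambda - 1}{\lambda^{x+1} - 1}.
\]
By first-step analysis, $h$ satisfies $h(y) = p \cdot h(y-1) + (1-p) \cdot h(y+1)$ for $1 \le y \le x$, with boundary conditions $h(0) = 0$ and $h(x+1) = 1$. The characteristic equation $(1-p) z^2 - z + p = 0$ has roots $1$ and $\lambda = \tfrac{p}{1-p}$, so the general solution is $h(y) = A + B \lambda^y$. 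The boundary conditions force $A = -B$ and $B(\lambda^{x+1} - 1) = 1$, giving
\[
h(y) = \frac{\lambda^y - 1}{\lambda^{x+1} - 1},
\]
and evaluating at $y = 1$ yields the claimed formula.

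There is essentially no serious obstacle here: the only thing to be careful about is the direction of the bias. Since $p > \tfrac{1}{2}$, we have $\lambda > 1$, and the leftward step (toward the origin) is the more likely one when $S_n > 0$, so the walk is indeed more likely to be absorbed at $0$ than at $x+1$, consistent with $h(1) \to 0$ as $x \to \infty$. This matches the known fact that the positive excursion from $1$ is finite almost surely.
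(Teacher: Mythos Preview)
Your argument is correct. The paper proves the same formula via electrical networks rather than the direct gambler's ruin recursion: it models the walk on $\{0,1,\dots,x+1\}$ as a network with resistance $\lambda^i$ on the edge $(i,i+1)$, computes $\Reff(0\leftrightarrow x+1)=\sum_{i=0}^x \lambda^i=\frac{\lambda^{x+1}-1}{\lambda-1}$, and then uses $\Pr(\tau_0^+<\tau_{x+1})=1-1/\Reff(0\leftrightarrow x+1)$. Your harmonic-function approach is the more elementary, self-contained route and avoids importing the network formalism; the paper's version is shorter once that formalism is in place and is stylistically consistent with the later sections, where effective resistance is used repeatedly. The two are of course equivalent: your $h(y)$ is exactly the harmonic function that the electrical formulation computes implicitly.
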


\begin{proof}
  Fix a non-negative integer $x\ge 0$. We write
  \[
    \tau_{x+1} = \inf\set{t\ge 0\suchthat S_t=x+1}
  \]
  and
  \[
    \tau_0^+ = \inf\set{t\ge 1\suchthat S_t=0}.
  \]
  Both $\tau_{x+1}$ and $\tau_0^+$ are almost surely finite, since the walk $(S_n)_n$ is recurrent. We next note that $(S_n)_{0\le n\le\tau_0^+\wedge\tau_{x+1}}$, conditioned on $S_1=1$, can be described using an electrical network, where the edge $(i,i+1)$ has resistance $\lambda^i$ for all $i\ge 0$. The vertices $0$ and $x+1$ are connected by a series of edges, and thus the effective resistance between them is
  \[
    \Reff(0\leftrightarrow x+1) = \sum_{i=0}^x\lambda^i = \frac{\lambda^{x+1}-1}{\lambda-1},
  \]
  and thus
  \[
    \Pr(M_1^+\le x\mid S_1=1) = \Pr(\tau_0^+ < \tau_{x+1}) = 1-\frac{1}{\Reff(0\leftrightarrow x+1)} = 1-\frac{\lambda-1}{\lambda^{x+1}-1}
  \]
  as required.
\end{proof}

Therefore, to study the distributions of $M_k^+$ and $M_k^-$, we need to know how many excursions are positive and how many are negative. We write, for any $k\ge 1$, the number of positive excursions until time $\rho_k$ as
\[
    N_k^+ = \left|\set{1\le j\le k\suchthat S_{\rho_{j-1}+1} > 0 } \right|
\]
and the number of negative excursions until time $\rho_k$ as
\[
    N_k^- = \left|\set{1\le j\le k\suchthat S_{\rho_{j-1}+1}< 0 } \right|,
\]
so that $N_k^++N_k^-=k$.
We remark for later that (by the strong Markov property) the excursions are independent, so the distribution of $N_k^+$ (as well as $N_k^-$) is binomial with parameters $k$ and $\frac{1}{2}$.

\begin{prop}\label{prop:ret-prob-given-nk}
  For any $k\ge 1$ and $0\le m\le k$,
  \begin{multline*}
    \Pr(R_{\rho_k}=\id\mid N_k^+=m) \\ = \frac{(\left|F\right|-1)^2}{\left|F\right|}\left(\sum_{a=1}^{\infty}\left|F\right|^{-a}\left(1-\frac{\lambda-1}{\lambda^a-1}\right)^m\right) \left(\sum_{b=1}^{\infty}\left|F\right|^{-b}\left(1-\frac{\lambda-1}{\lambda^b-1}\right)^{k-m}\right) ,
  \end{multline*}
 where $\lambda = \frac{p}{1-p}$.
\end{prop}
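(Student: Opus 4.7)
\textbf{Proof plan for \Pref{prop:ret-prob-given-nk}.}
The plan is to decouple the contribution of the positive excursions from the contribution of the negative ones, then express the remaining expectation via the distribution of the maximum of a single excursion from \Lref{lem:max-ex}. First, by \Pref{prop:ret-prob-given-path} and the tower property,
\[
\Pr(R_{\rho_k}=\id\mid N_k^+=m) \;=\; \left|F\right|^{-1}\,\E\!\left[\left|F\right|^{-M_k^+}\cdot\left|F\right|^{M_k^-}\;\Big|\;N_k^+=m\right].
\]
The strong Markov property applied at the successive return times $\rho_0,\rho_1,\ldots,\rho_k$ shows that the excursions are i.i.d.; in particular, conditional on $N_k^+=m$, the collection of positive excursions and the collection of negative excursions are independent of each other. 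Hence $M_k^+$ and $M_k^-$ are conditionally independent, and the expectation factorises into the product
\[
\E\!\left[\left|F\right|^{-M_k^+}\,\big|\,N_k^+=m\right]\cdot\E\!\left[\left|F\right|^{-|M_k^-|}\,\big|\,N_k^-=k-m\right].
\]

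Next I would identify each of these factors. Conditioned on $N_k^+=m$, the variable $M_k^+$ is distributed as the maximum of $m$ i.i.d.\ copies of $(M_1^+\mid S_1=1)$ (with the convention that $M_k^+=0$ when $m=0$); by \Lref{lem:max-ex},
\[
\Pr\!\left(M_k^+\le a\,\big|\,N_k^+=m\right) \;=\; \left(1-\frac{\lambda-1}{\lambda^{a+1}-1}\right)^{m}\qquad(a\ge 0).
\]
By symmetry the analogous statement holds for $-M_k^-$ with $m$ replaced by $k-m$.

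Finally I would evaluate $\E[|F|^{-M}]$ for a non-negative integer valued $M$ by summation by parts:
\[
\E\!\left[\left|F\right|^{-M}\right] \;=\; \frac{\left|F\right|-1}{\left|F\right|}\sum_{a=0}^{\infty}\left|F\right|^{-a}\Pr(M\le a),
\]
and then shift the index $a\mapsto a-1$ to turn $\lambda^{a+1}-1$ into $\lambda^{a}-1$, picking up a factor of $|F|$ that cancels the denominator and leaves the clean prefactor $|F|-1$. Applying this to $M_k^+$ and $|M_k^-|$ and multiplying by the $|F|^{-1}$ from the first display produces exactly the product $\tfrac{(|F|-1)^2}{|F|}(\cdots)(\cdots)$ in the statement.

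There is no serious obstacle; the only two points that require care are the conditional independence of positive and negative excursions (which is a clean application of the strong Markov property at the return times) and the index shift between \Lref{lem:max-ex}'s $\lambda^{x+1}-1$ and the $\lambda^{a}-1$ appearing in the proposition, so one should check the boundary cases $m=0$ and $m=k$ to make sure the geometric series $\sum_{a\ge 1}|F|^{-a}=1/(|F|-1)$ recovers $\E[|F|^{-M_k^\pm}]=1$.
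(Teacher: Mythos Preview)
Your proposal is correct and follows essentially the same route as the paper: both use \Pref{prop:ret-prob-given-path} to reduce to $\tfrac{1}{|F|}\E[|F|^{-M_k^+}\mid N_k^+=m]\,\E[|F|^{M_k^-}\mid N_k^+=m]$, invoke the strong Markov property for conditional independence and the identification of $M_k^+$ (given $N_k^+=m$) as the maximum of $m$ i.i.d.\ single-excursion maxima, then compute each factor via summation by parts together with \Lref{lem:max-ex} and the index shift $a\mapsto a-1$. Your explicit check of the boundary cases $m=0$ and $m=k$ (where the factor collapses to the geometric series $\sum_{a\ge 1}|F|^{-a}=1/(|F|-1)$) is a nice touch that the paper omits.
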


\begin{proof}
  We first note that, by the strong Markov property, the excursions $( (S_{\rho_{k-1}} , \ldots, S_{\rho_k}) )_{k \geq 1}$ are independent.  Therefore, conditioned on the value of $N_k^+$, the random variables $M_k^+$ and $M_k^-$ are independent. Thus, by \Pref{prop:ret-prob-given-path},
  \begin{align*}
    \Pr [ R_{\rho_k} = \id \mid N_k^+=m ] & = \E [ \left|F\right|^{-M_k^+ + M_k^- - 1} \mid N_k^+=m ] \\
    & = \frac{1}{\left|F\right|} \E [ \left|F\right|^{-M_k^+} \mid N_k^+=m ] \cdot \E[ \left|F\right|^{M_k^-} \mid N_k^+=m ].
  \end{align*}
  The measure preserving transformation $(S_n)_n \mapsto (-S_n)_n$ shows that the distribution of $M_k^-$ conditioned on $N_k^+=m$ is the same as that of $-M_k^+$ conditioned on $N_k^+=k-m$. Therefore,
  \[
    \Pr [ R_{\rho_k} = \id \mid N_k^+=m ] = \frac{1}{\left|F\right|} \E [ \left|F\right|^{-M_k^+} \mid N_k^+=m ] \cdot \E[ \left|F\right|^{-M_k^+} \mid N_k^+=k-m ] .
  \]
  We next observe that
  \begin{align*}
    \E [ \left|F\right|^{-M_k^+} \mid N_k^+=m ] & = \sum_{a=1}^\infty \left|F\right|^{-a} \Pr ( M_k^+=a \mid N_k^+=m ) \\
    & = \sum_{a=1}^\infty (\left|F\right|^{-a}-\left|F\right|^{-a-1}) \Pr ( M_k^+ \leq a \mid N_k^+=m ) \\
    & = (\left|F\right|-1)\sum_{a=1}^\infty \left|F\right|^{-a-1} \big( \Pr ( M_1^+ \leq a \mid S_1=1 ) \big)^m  \\
    & = (\left|F\right|-1)\sum_{a=2}^\infty \left|F\right|^{-a } \Big( 1 - \frac{\lambda-1}{\lambda^{a}- 1} \Big)^m \\
  \end{align*}
where in the third line we have used the fact that the excursions are independent, so that
$$ \Pr ( M_k^+ \leq a \mid N_k^+=m ) = \big( \Pr( M_1^+ \leq a \mid S_1=1 ) \big)^m , $$
and we have also used Lemma \ref{lem:max-ex}.
\end{proof}

We now analyze the factors in the above proposition.

\begin{lem}\label{lem:calculus}
  Set $\alpha=\frac{\log \left|F\right|}{\log \lambda}$ for $\lambda = \frac{p}{1-p}$. Then, for any integer $m$ larger than some constant depending on $p,\left|F\right|$, we have
  \[
    \sum_{a=1}^{\infty}\left|F\right|^{-a}\left(1-\frac{\lambda-1}{\lambda^a-1}\right)^m \simeq \frac{1}{m^{\alpha}},
  \]
  where the implied constants depend on $p$ and $\left|F\right|$.
\end{lem}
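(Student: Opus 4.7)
The plan is to pinpoint the critical scale $a^{*}\approx\log_{\lambda}m$ where the summand transitions between two regimes, and to estimate the sum by splitting at this scale. The heuristic is that $\left(1-\tfrac{\lambda-1}{\lambda^a-1}\right)^{m}$ behaves like $\exp\!\left(-\tfrac{(\lambda-1)m}{\lambda^a-1}\right)$: it is close to $1$ for $a-\log_{\lambda}m\gg 1$, of constant order for $|a-\log_{\lambda}m|=O(1)$, and super-exponentially small for $\log_{\lambda}m-a\gg 1$. Since $|F|^{-a}=\lambda^{-\alpha a}$ equals $m^{-\alpha}$ exactly at $a=\log_{\lambda}m$, the critical rate is naturally $m^{-\alpha}$.

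For the upper bound, set $a^{*}=\lceil\log_{\lambda}m\rceil$, so that $|F|^{-a^{*}}\asymp m^{-\alpha}$. First, the tail $a\ge a^{*}$ contributes at most $\sum_{a\ge a^{*}}|F|^{-a}=\frac{|F|^{-a^{*}}}{1-|F|^{-1}}\lesssim m^{-\alpha}$ by the trivial bound $\left(1-\tfrac{\lambda-1}{\lambda^a-1}\right)^{m}\le 1$. For the head $1\le a<a^{*}$, the inequality $1-x\le e^{-x}$ together with $\tfrac{\lambda-1}{\lambda^a-1}\ge(\lambda-1)\lambda^{-a}$ yields $\left(1-\tfrac{\lambda-1}{\lambda^a-1}\right)^{m}\le\exp\!\left(-(\lambda-1)m\lambda^{-a}\right)$. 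Writing $a=a^{*}-k$ with $k\ge 1$ gives $m\lambda^{-a}\ge\lambda^{k-1}$, so the $a$-th term is at most $|F|^{-a^{*}}\cdot|F|^{k}\exp\!\left(-(\lambda-1)\lambda^{k-1}\right)$; summing over $k\ge 1$ produces a constant depending only on $p$ and $|F|$, since the double-exponential decay of $\exp(-(\lambda-1)\lambda^{k-1})$ beats the geometric growth of $|F|^{k}$. Hence the head is also $O(m^{-\alpha})$.

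For the lower bound, choose a constant $C=C(p,|F|)$ with $\lambda^{C}\ge 4(\lambda-1)$ and set $a^{**}=\lceil\log_{\lambda}m\rceil+C$. For $a\ge a^{**}$ and $m$ large, $m\cdot\tfrac{\lambda-1}{\lambda^a-1}\le\tfrac{1}{2}$; applying the elementary inequality $1-x\ge e^{-2x}$ valid on $[0,\tfrac{1}{2}]$ gives $\left(1-\tfrac{\lambda-1}{\lambda^a-1}\right)^{m}\ge e^{-1}$. Restricting the sum to $a\ge a^{**}$ therefore yields
\[
    \sum_{a=1}^{\infty}|F|^{-a}\!\left(1-\tfrac{\lambda-1}{\lambda^a-1}\right)^{m}\ge e^{-1}\!\sum_{a\ge a^{**}}|F|^{-a}\gtrsim m^{-\alpha},
\]
with implied constant depending only on $p$ and $|F|$. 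The only step requiring real care is the head estimate in the upper bound, where the double-exponential decay of the exponential factor must be leveraged to dominate the geometric growth of $|F|^{k}$; everything else is an elementary computation in tracking constants.
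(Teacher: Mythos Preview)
Your proof is correct and follows essentially the same Laplace-method strategy as the paper: locate the critical scale $a\approx\log_{\lambda}m$, split the sum there, bound the tail by the geometric series $\sum|F|^{-a}$, and control the head via the exponential approximation $\left(1-\tfrac{\lambda-1}{\lambda^a-1}\right)^{m}\le\exp(-(\lambda-1)m\lambda^{-a})$. The only cosmetic differences are that the paper finds the exact maximizer $a_0=\log_{\lambda}\tfrac{(\lambda-1)m}{\alpha}$ via calculus and controls the head by showing the summand increases geometrically up to $a_0$, whereas you substitute $a=a^{*}-k$ and sum the resulting doubly-exponentially decaying series directly; and for the lower bound the paper keeps a single term near $a_0$ while you sum the whole tail $a\ge a^{**}$.
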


\begin{proof}
  For the upper bound, we note that
  \[
    \left|F\right|^{-a}\left(1-\frac{\lambda-1}{\lambda^a-1}\right)^m \le \left|F\right|^{-a}e^{-\frac{\lambda-1}{\lambda^a-1}m} \le \left|F\right|^{-a}e^{- \frac{\lambda-1}{\lambda^a}m}=e^{-a\log \left|F\right|-\frac{\lambda-1}{\lambda^a}m}.
  \]
  Let $f(a)=-a\log \left|F\right|-\frac{\lambda-1}{\lambda^a}m$. The maximum of $f$ is achieved when
  \[
    f'(a)=-\log \left|F\right|+\frac{(\lambda-1)m\log \lambda}{\lambda^a}=0
  \]
  i.e., when $a=a_0\coloneqq\log_\lambda\frac{(\lambda-1)m}{\alpha}$, so that $\lambda^{a_0}=\frac{(\lambda-1)m}{\alpha}$. In this case,
  \[
    e^{f(a_0)} = \left|F\right|^{-a_0}e^{-\frac{\lambda-1}{\lambda^{a_0}}m} = \frac{1}{(\lambda^{a_0})^{\alpha}}e^{-\frac{\lambda-1}{\lambda^{a_0}}m} = \left(\frac{\alpha}{(\lambda-1)em}\right)^{\alpha}.
  \]
  Note that if $a\le a_0$ then $\lambda^a\le \frac{(\lambda-1)m}{\alpha}$, and thus
  \[
    f(a) - f(a-1) = -\log \left|F\right| + \frac{(\lambda-1)^2}{\lambda^a}m \ge -\log \left|F\right| + \alpha(\lambda-1)>0
  \]
  since $\lambda-1>\log \lambda$. Therefore $e^{f(a-1)} \le C_1e^{f(a)}$ for $C_1=e^{\alpha(\lambda-1)-\log \left|F\right|}>1$, showing that
  \[
    \sum_{a\le a_0}e^{f(a)} \le \frac{C_1}{C_1-1}e^{f(a_0)} = \frac{C_2}{m^{\alpha}}
  \]
  for a constant $C_2$ that depends on $p,\left|F\right|$, while
  \[
    \sum_{a>a_0} e^{f(a)} = \sum_{a>a_0} \left|F\right|^{-a}e^{-\frac{\lambda-1}{\lambda^a}m} \le \sum_{a>a_0}\left|F\right|^{-a} = \frac{\left|F\right|^{-a_0}}{\left|F\right|-1} = \frac{1}{\left|F\right|-1}\left(\frac{\alpha}{\lambda-1}\right)^{\alpha}\frac{1}{m^{\alpha}}.
  \]
  Together we get
  \[
    \sum_{a=1}^{\infty}\left|F\right|^{-a}\left(1-\frac{\lambda-1}{\lambda^a-1}\right)^m \lesssim \frac{1}{m^{\alpha}}.
  \]

  For the lower bound, we have
  \begin{align*}
    \sum_{a=1}^{\infty}\left|F\right|^{-a}\left(1-\frac{\lambda-1}{\lambda^a-1}\right)^m & \ge \sum_{a=1}^{\infty}\left|F\right|^{-a}\left(1-\frac{1}{\lambda^{a-1}}\right)^m \\
    & \ge \left|F\right|^{-\ceil{a_0}+1}\left(1-\frac{1}{\lambda^{\ceil{a_0}}}\right)^m \\
    & \ge \left|F\right|^{-(a_0+2)}\left(1-\frac{1}{\lambda^{a_0}}\right)^m \\
    & = \frac{1}{\left|F\right|^2}\left(\frac{\alpha}{(\lambda-1)m}\right)^{\alpha}\left(1-\frac{\alpha \lambda}{(\lambda-1)m}\right)^m.
  \end{align*}
  Recall that $\left(1-\frac{x}{m}\right)^m\ge e^{-x}\left(1-\frac{x^2}{m}\right)$ for all $m\ge 1$ and $\left|x\right|\le m$, hence for $m\ge 2\left(\frac{\alpha \lambda}{\lambda-1}\right)^2$ we have $\left(1-\frac{\alpha \lambda}{(\lambda-1)m}\right)^m\ge \frac{1}{2}e^{-\frac{\alpha \lambda}{\lambda-1}}$, showing that
  \[
    \sum_{a=1}^{\infty}\left|F\right|^{-a}\left(1-\frac{\lambda-1}{\lambda^a-1}\right)^m \ge \frac{1}{2\left|F\right|^2}e^{-\frac{\alpha \lambda}{\lambda-1}}\frac{1}{m^{\alpha}},
  \]
  proving that
  \[
    \frac{1}{m^{\alpha}}\lesssim \sum_{a=1}^{\infty}\left|F\right|^{-a}\left(1-\frac{\lambda-1}{\lambda^a-1}\right)^m.\qedhere
  \]
\end{proof}

We can now conclude:

\begin{cor}\label{cor:ret-prob}
  Set $\alpha = \frac{\log \left|F\right|}{\log \lambda}$ for $\lambda = \frac{p}{1-p}$. Then, for any integer $k$ larger than some constant depending on $p,\left|F\right|$, we have
  \[
    \Pr(R_{\rho_k}=\id) \simeq \frac{1}{k^{2\alpha}},
  \]
  where the implied constants depend on $p,\left|F\right|$.
\end{cor}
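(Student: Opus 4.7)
The plan is to combine Proposition \ref{prop:ret-prob-given-nk} with the estimate in Lemma \ref{lem:calculus} by integrating out the binomial variable $N_k^+$. Setting $A(m) = \sum_{a=1}^\infty |F|^{-a}\bigl(1-\tfrac{\lambda-1}{\lambda^a-1}\bigr)^m$, the law of total probability gives
\[
  \Pr(R_{\rho_k}=\id) = \frac{(|F|-1)^2}{|F|} \, \E\bigl[A(N_k^+)\,A(k-N_k^+)\bigr],
\]
with $N_k^+\sim\Bin(k,\tfrac12)$. Lemma \ref{lem:calculus} says $A(m)\simeq m^{-\alpha}$ for large $m$, so the intuition is that the dominant contribution comes from $N_k^+$ near $k/2$, where $A(N_k^+)A(k-N_k^+)\simeq k^{-2\alpha}$, and the task is simply to justify this rigorously using the concentration of $N_k^+$.

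The first observation I would record is that $A$ is monotonically decreasing in $m$: for every $a\ge 1$ we have $1-\tfrac{\lambda-1}{\lambda^a-1}\in[0,1)$, so each summand decreases with $m$. In particular $A(m)\le A(0)=\tfrac{1}{|F|-1}$, which gives a trivial uniform bound on $A(N_k^+)A(k-N_k^+)$.

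For the upper bound I would split the expectation according to whether $N_k^+\in[k/4,3k/4]$ or not. On the typical event, monotonicity yields $A(N_k^+)A(k-N_k^+)\le A(\lfloor k/4\rfloor)^2$, which is $\lesssim k^{-2\alpha}$ by Lemma \ref{lem:calculus} once $k$ is large enough. On the atypical event, $A(N_k^+)A(k-N_k^+)\le A(0)^2$ is bounded, while a standard Chernoff bound for $\Bin(k,\tfrac12)$ makes $\Pr(|N_k^+-k/2|>k/4)\le e^{-ck}$, which is super-polynomially smaller than $k^{-2\alpha}$ and therefore negligible.

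For the lower bound I would restrict the expectation to the same typical event $\{k/4\le N_k^+\le 3k/4\}$. By monotonicity, on this event $A(N_k^+)A(k-N_k^+)\ge A(\lfloor 3k/4\rfloor)^2$, which is $\gtrsim k^{-2\alpha}$ by Lemma \ref{lem:calculus}; the typical event has probability bounded below by a positive constant (e.g.\ by Chebyshev applied to $\Bin(k,\tfrac12)$, whose variance is $k/4$), and multiplying these two bounds gives $\E[A(N_k^+)A(k-N_k^+)]\gtrsim k^{-2\alpha}$. I do not foresee any genuine obstacle here; the only point requiring care is to check that $\lfloor k/4\rfloor$ is large enough to invoke Lemma \ref{lem:calculus}, which only needs $k$ to exceed a constant depending on $p$ and $|F|$, matching the hypothesis of the corollary.
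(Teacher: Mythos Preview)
Your proposal is correct and follows essentially the same route as the paper: both arguments average the formula from Proposition~\ref{prop:ret-prob-given-nk} over the binomial law of $N_k^+$, use a concentration bound (the paper uses Hoeffding with the window $[k/3,2k/3]$, you use Chernoff/Chebyshev with $[k/4,3k/4]$) to localize $N_k^+$ near $k/2$, and then apply Lemma~\ref{lem:calculus} on the typical event while the atypical event contributes an exponentially small error. Your explicit use of the monotonicity of $A(m)$ to reduce to the endpoints is a nice touch that the paper leaves implicit (it invokes Lemma~\ref{lem:calculus} directly for each $m$ in the window), but the underlying idea is the same.
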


\begin{proof}
  Note first that by Hoeffding's inequality,
  \[
    \Pr\left(\frac{k}{3}\le N_k^+\le\frac{2k}{3}\right) \le 2e^{-\frac{k}{18}}.
  \]
  For any $\frac{k}{3}\le m\le\frac{2k}{3}$, if $N_k^+=m$ then $\frac{k}{3}\le N_k^-=k-m\le\frac{2k}{3}$. Therefore, when $k$ is larger than a constant depending on $p,\left|F\right|$ we may apply \Lref{lem:calculus} and deduce
  \[
    \frac{c}{k^{\alpha}}\le \sum_{a=1}^{\infty}\left|F\right|^{-a}\left(1-\frac{\lambda-1}{\lambda^a-1}\right)^m \le \frac{C}{k^{\alpha}}
  \]
  and
  \[
    \frac{c}{k^{\alpha}}\le \sum_{b=1}^{\infty}\left|F\right|^{-b}\left(1-\frac{\lambda-1}{\lambda^b-1}\right)^{k-m} \le \frac{C}{k^{\alpha}}
  \]
  for $c,C>0$ constants which depend on $p,\left|F\right|$. Plugging this into \Pref{prop:ret-prob-given-nk}, we have
  \[
    \frac{c^2}{k^{2\alpha}} - 2e^{-\frac{k}{18}} \le \Pr(R_{\rho_k} = \id) \le \frac{C^2}{2k^{2\alpha}} + 2 e^{-\frac{k}{18}},
  \]
  which implies the corollary.
\end{proof}

We are ready to prove \Tref{thm:Z-phase}.

\begin{proof}[{Proof of \Tref{thm:Z-phase}}]
  By \Rref{rem:rec-tran}, the random walk $(R_n)_n$ is recurrent if and only if $\sum_{k=1}^{\infty}\Pr(R_{\rho_k}=\id)=\infty$. Applying \Cref{cor:ret-prob}, with $\alpha = \frac{\log\left|F\right|}{\log\lambda}$ and $\lambda = \frac{p}{1-p}$, this happens if and only if
  \[
    2 \alpha \leq 1 \iff \frac{p}{1-p}=\lambda \geq \left|F\right|^2 \iff p \geq \frac{\left|F\right|^2}{\left|F\right|^2+1}.\qedhere
  \]
\end{proof}

\section{\texorpdfstring{Local time over $\Z$}{Local time over Z}}\label{sec:Z-case-local-time}

Our next goal is to prove \Tref{thm:Z-local-time}. To do this, we first estimate the local time of $S_n$ at $0$.

\begin{prop}\label{prop:cramer}
  Let $m=\frac{2p}{2p-1}$. Then:
  \begin{enumerate}
    \item $\E[\rho_k]=km$ for every $k\ge 1$.
    \item For any $\eps>0$ there exists a constant $c=c(p,\eps)>0$ such that
    \[
        \Pr\left(\left|\rho_k-km\right|\ge\eps k\right)\le 2\exp(-ck)
    \]
    for all $k\ge 1$.
  \end{enumerate}
\end{prop}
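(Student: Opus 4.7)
The plan is to reduce $\rho_k$ to a sum of i.i.d.\ random variables via the strong Markov property and then apply standard large-deviation arguments.

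For part~(1), by the strong Markov property the excursion lengths $\rho_j - \rho_{j-1}$ are i.i.d.\ copies of $\rho_1$, so it suffices to compute $\E[\rho_1]$. Conditioning on $S_1 \in \set{\pm 1}$ (each having probability $\tfrac{1}{2}$) and using the symmetry between the positive and negative halves of the walk, $\E[\rho_1] = 1 + h(1)$, where $h(x) = \E[\tau_0 \mid S_0 = x]$ is the expected hitting time of $0$ for the walk on the positive integers that steps $-1$ with probability $p$ and $+1$ with probability $1-p$. Standard one-step analysis gives $h(x) = 1 + p \cdot h(x-1) + (1-p) \cdot h(x+1)$ with $h(0) = 0$, whose minimal non-negative solution is $h(x) = x/(2p-1)$. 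Hence $\E[\rho_1] = 1 + \tfrac{1}{2p-1} = \tfrac{2p}{2p-1} = m$, so $\E[\rho_k] = km$.

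For part~(2), I would establish that $\rho_1$ has a moment generating function finite in a neighbourhood of $0$, and then invoke Cram\'er's theorem (or a direct Chernoff bound) for the i.i.d.\ sum $\rho_k = \sum_{j=1}^{k}(\rho_j - \rho_{j-1})$. To bound the MGF, I would analyse the hitting time $\tau$ of $0$ from $1$ via the functional equation
\[
    (1-p) e^{\theta} \psi(\theta)^2 - \psi(\theta) + p e^{\theta} = 0,
\]
where $\psi(\theta) = \E[e^{\theta \tau}]$, obtained by conditioning on the first step from $1$ (either step directly to $0$, or step to $2$ and decompose the hitting time of $0$ into two independent copies of $\tau$). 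The quadratic has a real solution precisely when $4p(1-p) e^{2\theta} \le 1$, and since $p > \tfrac{1}{2}$ gives $4p(1-p) < 1$, this holds for all $\theta$ in a right neighbourhood of $0$. Therefore $\E[e^{\theta \rho_1}] = e^{\theta} \psi(\theta) < \infty$ for all sufficiently small $\theta > 0$; for $\theta < 0$ finiteness is automatic since $\rho_1 \ge 2$.

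With a finite MGF near $0$, Cram\'er's theorem produces a constant $c = c(p,\eps) > 0$ such that $\Pr(|\rho_k - km| \ge \eps k) \le 2 \exp(-ck)$, giving~(2). The only edge case is that $\rho_1 \ge 2$ deterministically, so the Cram\'er rate function is strictly positive at $m - \eps$ only when $m - \eps > 2$; for larger $\eps$ the event $\set{\rho_k \le (m-\eps)k}$ is either empty or contained in $\set{\rho_j - \rho_{j-1} = 2 \text{ for all } j \le k}$, whose probability is $p^k$ and is thus also exponentially small. The main technical step is the MGF bound via the quadratic equation above; everything else is a routine application of large deviations for i.i.d.\ sums.
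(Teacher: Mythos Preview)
Your proposal is correct and follows essentially the same strategy as the paper: decompose $\rho_k$ as an i.i.d.\ sum via the strong Markov property, show $\E[\rho_1]=m$, establish that the moment generating function of $\rho_1$ is finite in a neighbourhood of $0$, and conclude with a Chernoff/Cram\'er bound. The only differences are cosmetic: the paper computes the distribution of $\rho_1$ explicitly via Catalan numbers, obtaining the closed form $\E[e^{s\rho_1}]=\frac{1-\sqrt{1-4p(1-p)e^{2s}}}{2(1-p)}$, and then does the Chernoff step by hand (Taylor-expanding $\log\E[e^{s\rho_1}]-sm$ to second order), whereas you obtain the mean by a one-step recursion and the MGF via the quadratic functional equation for the hitting-time generating function, and then invoke Cram\'er's theorem as a black box. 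Both routes arrive at the same MGF (your quadratic yields exactly the paper's closed form), so neither is more general; yours is perhaps slightly more conceptual, the paper's slightly more self-contained. Your edge-case discussion for $m-\eps\le 2$ is not actually needed, since the Cram\'er rate function is automatically positive there (indeed $I(2)=-\log p$ and $I(a)=+\infty$ for $a<2$), but it does no harm.
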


\begin{proof}
  We first note that
  \begin{equation}\label{eq:rho_k-telescopic}
    \rho_k = \rho_1 + (\rho_2-\rho_1) + \cdots + (\rho_k-\rho_{k-1}),
  \end{equation}
 and by the strong Markov property, $\rho_1,\rho_2-\rho_1,\dots,\rho_k-\rho_{k-1}$ are independent and identically distributed. Therefore $\E[\rho_k]=\E[\rho_1]k$.

 Next, we note that the distribution of $\rho_1$ is
 \[
    \Pr(\rho_1=2t) = C_{t-1}p^t(1-p)^{t-1}
 \]
 for all $t\ge 1$, where $C_{t-1}=\frac{1}{t}\binom{2t-2}{t-1}$ is the $(t-1)$-th Catalan number. Its expected value is therefore
 \begin{align*}
   \E[\rho_1] & = \sum_{t=1}^{\infty}2tC_{t-1}p^t(1-p)^{t-1} = 2p\sum_{t=1}^{\infty}\binom{2t-2}{t-1}(p(1-p))^{t-1} \\
   & = 2p\frac{1}{\sqrt{1-4p(1-p)}} = \frac{2p}{2p-1} = m,
 \end{align*}
 where the third equality uses $\sum_{t=1}^{\infty}\binom{2t-2}{t-1}x^t = \frac{1}{\sqrt{1-4x}}$. With the above, we showed that $\E[\rho_k]=\E[\rho_1]k=km$, proving the first part of the proposition.

 For the second part, we note that the moment generating function of $\rho_1$ is
 \begin{align*}
   \E[e^{s\rho_1}] & = \sum_{t=1}^{\infty}e^{2st}C_{t-1}p^t(1-p)^{t-1} = pe^{2s}\sum_{t=1}^{\infty}C_{t-1}(p(1-p)e^{2s})^{t-1} \\
   & = pe^{2s}\frac{1-\sqrt{1-4p(1-p)e^{2s}}}{2p(1-p)e^{2s}} = \frac{1-\sqrt{1-4p(1-p)e^{2s}}}{2(1-p)}.
 \end{align*}
 Write $\psi(s)\coloneqq\log\E[e^{s\rho_1}]-sm$. The function $\psi$ is differentiable continuously twice whenever $s<\frac{1}{2}\log\frac{1}{4p(1-p)}\eqqcolon s_0$. We also note that $\psi(0)=\psi'(0)=0$, and write $c_0=\frac{1}{2}\sup_{\left|s\right|\le \frac{1}{2}s_0}\left|\psi''(s)\right|$. By Taylor's expansion, for any $\left|s\right|\le\frac{1}{2}s_0$ we have
 \[
    \psi(s) = \psi(0) + \psi'(0)s + \frac{1}{2}\psi''(s')s^2 = \frac{1}{2}\psi''(s')s^2 \le c_0s^2
 \]
 where $s'$ is a point which lies between $0$ and $s$.

 We may now apply Markov's inequality and \eqref{eq:rho_k-telescopic} to deduce that for every $\eps>0$,
 \[
    \Pr(\rho_k\ge (m+\eps)k) \le \frac{\E[e^{s\rho_k}]}{e^{sk(m+\eps)}} = \frac{\E[e^{s\rho_1}]^k}{e^{sk(m+\eps)}} = e^{k(\psi(s)-\eps s)} \le e^{k(c_0s^2-\eps s)}
 \]
 and
 \[
    \Pr(\rho_k\le (m-\eps)k) \le \frac{\E[e^{-s\rho_k}]}{e^{-sk(m-\eps)}} = \frac{\E[e^{-s\rho_1}]^k}{e^{-sk(m-\eps)}} = e^{k(\psi(-s)-\eps s)} \le e^{k(c_0s^2-\eps s)}.
 \]
 Taking $s=\min\set{\frac{\eps}{2c_0},\frac{1}{2}s_0}$, we get
 \[
    \Pr(\left|\rho_k-km\right|\ge\eps k) \le 2e^{-ck}
 \]
 for some constant $c>0$ depending on $p,\eps$, as required.
\end{proof}

\begin{proof}[{Proof of \Tref{thm:Z-local-time}}]
Let $m = \frac{2p}{2p-1}$.
  Writing $A=\set{\rho_{\floor{n/2m}}\le n\le \rho_{\ceil{2n/m}}}$, we have
  \begin{align*}
    \E[\xi(\id,n)] & = \E[\xi(\id,n)\ind{A}] + \E[\xi(\id,n)\ind{A^c}] \\
    & \le \E[\xi(\id,\rho_{\ceil{2n/m}})] + n\Pr(A^c)
  \end{align*}
  and
  \begin{align*}
    \E[\xi(\id,n)] & \ge \E[\xi(\id,n)\ind{A}] \ge \E[\xi(\id,\rho_{\floor{n/2m}})\ind{A}]\\
    & = \E[\xi(\id,\rho_{\floor{n/2m}})] - \E[\xi(\id,\rho_{\floor{n/2m}})\ind{A^c}]\\
    & \ge \E[\xi(\id,\rho_{\floor{n/2m}})] - \frac{n}{2m}\Pr(A^c).
  \end{align*}
  By \Pref{prop:cramer}, there is a constant $c>0$, depending only on $p$, such that $\Pr(A^c) \le 2e^{-cn}$, and thus
  \[
    \E[\xi(\id,\rho_{\floor{n/2m}})] -\frac{1}{m}ne^{-cn} \le \E[\xi(\id,n)] \le \E[\xi(\id,\rho_{\ceil{2n/m}})] + 2ne^{-cn}.
  \]
  Finally, \Cref{cor:ret-prob} shows that for $\alpha = \frac{\log 2}{\log p - \log (1-p)}$,
  \[
    \E[\xi(\id,\rho_k)] = 1 + \sum_{j=1}^k\Pr(R_{\rho_j}=\id) \simeq \sum_{j=1}^k \frac{1}{j^{2\alpha}}\simeq \begin{cases} 1, & p<\frac{\left|F\right|^2}{\left|F\right|^2+1} \\ \log n, & p=\frac{\left|F\right|^2}{\left|F\right|^2+1} \\ n^{1-2\alpha}, & p>\frac{\left|F\right|^2}{\left|F\right|^2+1} \end{cases}
  \]
  for both $k\in\set{\floor{n/2m},\ceil{2n/m}}$, and the theorem follows.
\end{proof}

\section{Homesick random walks}\label{sec:homesick}

We turn to study the stationary random walks over an arbitrary finitely generated group~$G$. Before we do that, we recall the notion of homesick random walks on graphs. These random walks were first studied on trees (see e.g.\ \cite{BerrettiSokal85,Krug88,LawlerSokal88,Lyons90}), and were later considered on general Cayley graphs \cite{Lyons95,LyonsPemantlePeres96,Revelle01}.

Let $\Gamma=(V,E)$ be a locally finite graph with root $o\in V$, and let $\lambda\ge 1$ be a real number. The \textbf{$\lambda$-homesick random walk} $(Z_n)_{n=0}^{\infty}$ on $\Gamma$ is then defined as follows. We first set $Z_0=o$. For each $v\in V$, let $v_1,\dots,v_j$ denote the neighbors of~$v$ with $\dist(o,v_i)=\dist(o,v)-1$ for all $1\le i\le j$ (notice that $j\ge 1$ unless $v=o$), and let $v_1',\dots,v_k'$ denote the other neighbors of $v$. We then define the transition probability from $v\in V$ to $w\in V$ by
\[
    T(v,w) = \begin{cases} \frac{\lambda}{\lambda j + k}, & w\in\set{v_1,\dots,v_j} \\ \frac{1}{\lambda j + k}, & w\in\set{v_1',\dots,v_k'} \\ 0, & \text{otherwise}.  \end{cases}
\]
When $\lambda=1$, this is a simple random walk on $\Gamma$, and otherwise has a drift towards the root $o$. Alternatively, the $\lambda$-homesick random walk can be described by an electrical network, where the resistance of an edge $(v,w)\in E$ is $\lambda^{\min\set{\dist(o,v),\dist(o,w)}}$.

We denote by
\[
    B_r = \set{ v \in V \suchthat \dist(o,v) \le r }
\]
and
\[
    K_r = \set{ v \in V \suchthat \dist(o,v) = r }
\]
the ball and sphere of radius~$r$ around the root~$o$, respectively.
Note that $B_r$ is well defined also for non-integer~$r$, in which case $B_r = B_{\floor{r}}$.
For a nonempty subset $A \sub V$, we write
\[
    \partial_E A = \set{ (u,v) \in E \suchthat u \in A, \, v \notin A }
\]
for its edge boundary.

We define the stopping times
\[
    \tau_o^+ = \inf \set{ n \ge 1 \suchthat Z_n = o }
\]
and, for any nonempty subset $A \sub V$,
\[
    \tau_A = \inf \set{ n \ge 0 \suchthat Z_n \in A }.
\]
Both may be infinite if the walk $(Z_n)$ is transient.
For a vertex $v \in V$, we also write $\tau_v = \tau_{\set{v}}$.

We now give two concentration results on the size of $\range(Z_n)=\set{Z_0,\dots,Z_n}$. While $Z_n$ is supported on the whole ball of radius $n$, when the drift is large enough we expect it to remain, with high probability, in a ball of radius $C\log n$ (where $C$ depends on $\lambda$). This is formulated in the following proposition:

\begin{prop}\label{prop:homesick-range-up}
  Let $\Gamma=(V,E)$ be a locally finite graph with root $o$, let $\lambda\ge 1$, and let $Z_n$ be the $\lambda$-homesick random walk on $(\Gamma,o)$. Then
  \[
    \Pr\left(\max_{i<\tau_o^+}\dist(o,Z_i)\ge r\right) \le \frac{1}{\deg o}\left(\sum_{i=0}^{r-1}\left|\partial_E B_i\right|^{-1}\lambda^i\right)^{-1}
  \]
  for every $r\ge 1$.
\end{prop}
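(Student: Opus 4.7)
The plan is to interpret the event in terms of the electrical network associated to the $\lambda$-homesick random walk, and then apply the Nash--Williams inequality to bound the effective resistance from below.

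First I would observe that the event $\{\max_{i<\tau_o^+}\dist(o,Z_i)\ge r\}$ coincides with the event $\{\tau_{K_r}<\tau_o^+\}$, i.e., the walk exits $B_{r-1}$ before returning to the root. As recalled in the paper, the $\lambda$-homesick walk is the network random walk for the conductances $c(u,v)=\lambda^{-\min\{\dist(o,u),\dist(o,v)\}}$ on edges $(u,v)\in E$. Since every edge incident to $o$ has conductance $\lambda^0=1$, the total conductance at $o$ is $\pi(o)=\deg o$. By the standard escape probability formula for reversible random walks (see, e.g., the treatment in Lyons--Peres),
\[
    \Pr(\tau_{K_r}<\tau_o^+) = \frac{1}{\pi(o)\,\Reff(o\leftrightarrow K_r)} = \frac{1}{\deg o\cdot \Reff(o\leftrightarrow K_r)}.
\]

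Next I would apply the Nash--Williams inequality with the cut-sets $\Pi_i = \partial_E B_i$ for $i=0,1,\dots,r-1$. These are disjoint edge sets, each of which separates $o$ from $K_r$ (any path from $o$ to $K_r$ must cross each $\Pi_i$ exactly as it moves from distance $i$ to distance $i+1$). Every edge in $\Pi_i$ joins a vertex at distance $i$ to one at distance $i+1$, so its conductance equals $\lambda^{-i}$. Nash--Williams then gives
\[
    \Reff(o\leftrightarrow K_r) \ge \sum_{i=0}^{r-1}\left(\sum_{e\in \Pi_i} c(e)\right)^{-1} = \sum_{i=0}^{r-1}\frac{\lambda^i}{\left|\partial_E B_i\right|}.
\]
Plugging this into the escape probability formula yields the claimed inequality.

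The argument is essentially bookkeeping: check the identification with the electrical network (done in the paper), verify that the $\Pi_i$ are genuine cut-sets, and compute the edge conductances on each level. There is no real obstacle here, so I expect the companion lower bound (presumably the matching proposition for the lower tail of the range, based on a dual flow or second-moment argument) to be the harder one in the section.
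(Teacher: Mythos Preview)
Your proof is correct and is essentially the paper's argument: the paper obtains the same lower bound on $\Reff(o\leftrightarrow K_r)$ by explicitly short-circuiting each sphere $K_i$ to a single vertex and computing the resulting series resistance, which is exactly the Nash--Williams inequality unpacked. The only difference is packaging---you invoke Nash--Williams as a black box, while the paper carries out the underlying gluing argument by hand.
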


\begin{proof}
  We first recall from the theory of electrical networks that
  \[
    \Pr\left(\max_{i<\tau_o^+}\dist(o,Z_i)\ge r\right) = \Pr(\tau_{K_r}<\tau_o^+) = \frac{1}{\deg o\;\Reff(o\leftrightarrow z_r;\Gamma_r)}
  \]
  where $\Gamma_r$ denotes the graph obtained from $\Gamma$ by gluing all vertices of $K_r$ in $\Gamma$ to one vertex $z_r$. Let $\Gamma'$ denote the graph obtained from $\Gamma$ by gluing each sphere $K_d$ in $\Gamma$ to a vertex $z_d$. Since $\Gamma'$ is obtained from $\Gamma_r$ by gluing vertices, we have
  \[
    \Reff(o\leftrightarrow z_r;\Gamma') \le \Reff(o\leftrightarrow z_r;\Gamma_r),
  \]
  and so
  \[
    \Pr\left(\max_{i<\tau_o^+}\dist(o,Z_i)\ge r\right) \le \frac{1}{\deg o\;\Reff(o\leftrightarrow z_r;\Gamma')}.
  \]
  Now $\Gamma'$ is a one-sided line, where the number of edges from $z_i$ to $z_{i+1}$ is $\left|\partial_E B_i\right|$, and the resistance of each one is $\lambda^i$. Since these are all connected in parallel, we have
  \[
    \Reff(z_i\leftrightarrow z_{i+1};\Gamma') = \left|\partial_E B_i\right|^{-1}\lambda^i,
  \]
  and since $z_0=o,z_1,\dots,z_r$ are connected in a series we have
  \[
    \Reff(o\leftrightarrow z_r;\Gamma') = \sum_{i=0}^{r-1}\left|\partial_E B_i\right|^{-1}\lambda^i.
  \]
  We therefore conclude that
  \[
    \Pr\left(\max_{i<\tau_o^+}\dist(o,Z_i)\ge r\right) \le \frac{1}{\deg o}\left(\sum_{i=0}^{r-1}\left|\partial_E B_i\right|^{-1}\lambda^i\right)^{-1},
  \]
  completing the proof.
\end{proof}

We next show that, with high probability, $Z_n$ covers a positive proportion of a ball of radius $c\log n$. If $Z_n$ is transient, then its range will be linear in $n$ with high probability, so we focus on the case where $Z_n$ is recurrent. Similarly to the previous sections, we write $\rho_0,\rho_1,\dots$ for the successive times at which $Z_n$ hits $o$, i.e., $\rho_0=0$ and for every $k\ge 1$ we have
\[
    \rho_k = \inf\set{n>\rho_{k-1}\suchthat Z_n=o}.
\]

\begin{prop}\label{prop:homesick-range-down}
  Let $\Gamma=(V,E)$ be a graph with root $o$, let $\lambda>1$, and let $Z_n$ be the $\lambda$-homesick random walk on $(\Gamma,o)$. Assume that $Z_n$ is recurrent. Then, for every $k\ge \left(\frac{5(\lambda-1)}{\deg o}\right)^{1/c}$ and $0<c<1$,
  \[
    \Pr\left(\left|\range(Z_{\rho_k})\right| \le \frac{N}{4}\right) \le e^{-\frac{N}{8}}
  \]
  where
  \[
    N = \min\set{\left|B_{\frac{c\log k}{\log\lambda}}\right|,\floor{\frac{\lambda-1}{\deg o}k^{1-c}}}.
  \]
\end{prop}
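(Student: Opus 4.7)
The plan is to lower bound $Y \coloneqq \left|\range(Z_{\rho_k}) \cap B_r\right|$ where $r = \floor{c\log k/\log\lambda}$ (so $\lambda^r \le k^c$), and then conclude via $\left|\range(Z_{\rho_k})\right| \ge Y$.

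First I would invoke the electrical-network representation of the $\lambda$-homesick walk. Edges incident to $o$ carry conductance $1$, so the weighted degree at $o$ equals $\deg o$, and the standard identity $\Pr(\tau_v<\tau_o^+) = 1/(\deg o\cdot\Reff(o\leftrightarrow v))$ combined with the series law along any geodesic from $o$ to $v$ gives, for $d=\dist(o,v)$,
\[
    \Reff(o\leftrightarrow v) \le \sum_{i=0}^{d-1}\lambda^i \le \frac{\lambda^d}{\lambda-1}.
\]
Hence for every $v\in B_r\setminus\set{o}$,
\[
    p_v \coloneqq \Pr(\tau_v<\tau_o^+) \ge p^* \coloneqq \frac{\lambda-1}{\deg o\cdot k^c}.
\]
The hypothesis $k\ge (5(\lambda-1)/\deg o)^{1/c}$ is there precisely to force $p^*\le 1/5$, which makes the inequality $(1-p^*)^k\le e^{-kp^*}$ genuinely useful.

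Next I would compute the first moment of $\left|B_r\right|-Y$. By the strong Markov property the $k$ excursions $(Z_{\rho_{j-1}},\dots,Z_{\rho_j})$ are i.i.d., so for each $v\in B_r\setminus\set{o}$,
\[
    \Pr(v\notin\range(Z_{\rho_k})) = (1-p_v)^k \le e^{-kp^*} = e^{-x}, \qquad x \coloneqq \frac{(\lambda-1)k^{1-c}}{\deg o}.
\]
Linearity of expectation then gives $\E[\left|B_r\right|-Y]\le \left|B_r\right|e^{-x}$.

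To finish I would apply Markov's inequality. Since $N\le\left|B_r\right|$, the event $\set{Y\le N/4}$ implies $\left|B_r\right|-Y\ge 3\left|B_r\right|/4$, hence $\Pr(Y\le N/4)\le\tfrac{4}{3}e^{-x}$. Using $N\le\floor{x}\le x$ one has $x-N/8\ge 7x/8\ge 7/8>\log(4/3)$ whenever $x\ge 1$, yielding $\tfrac{4}{3}e^{-x}\le e^{-N/8}$; the remaining regime $x<1$ is vacuous because then $N=0$ and $\Pr(\left|\range(Z_{\rho_k})\right|\le 0)=0$, since $o$ is always in the range. The main subtlety I anticipate is recognizing that first-moment plus Markov suffices here: the events $\set{v\in\range}$ are positively correlated across $v$, tempting one to reach for a second-moment or martingale argument, but the expected miss-count $\left|B_r\right|e^{-x}$ is already exponentially small compared to $N$, so Markov alone delivers the required exponential tail.
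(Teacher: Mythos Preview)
Your proof is correct and takes a genuinely different route from the paper. The paper partitions the $k$ excursions into $N$ disjoint blocks of $k'=\lfloor k/N\rfloor$ excursions each, assigns one target vertex $v_j\in B_r$ to the $j$th block, and uses the independence of the blocks to show that $\sum_j\ind{A_j}$ stochastically dominates a $\Bin(N,\tfrac12)$ random variable; the bound $e^{-N/8}$ then drops out of Chernoff. You instead bound the expected number of unvisited vertices in $B_r$ by $|B_r|e^{-x}$ and apply Markov's inequality directly. Your argument is shorter, avoids the block decomposition entirely, and in fact delivers the sharper tail $\tfrac43 e^{-x}$ (which beats $e^{-N/8}$ since $x\ge N$); the paper's independence trick is what one would reach for if a genuine concentration inequality for the range were the goal, but here the per-vertex miss probability is already exponentially small, so first moment suffices, exactly as you note. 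One small aside: your stated reason for the hypothesis $k\ge(5(\lambda-1)/\deg o)^{1/c}$ is not quite right, since $(1-p)^k\le e^{-kp}$ holds for all $p\in[0,1]$ regardless. In the paper that hypothesis is genuinely needed, to control the factor $(1-p^*)^{-1}\le\tfrac54$ arising from the floor in $k'=\lfloor k/N\rfloor$; your argument, by contrast, does not actually use it anywhere.
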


\begin{proof}
  We first note that for any $v\in V$, there is a path $v_0=o,v_1,\dots,v_d=v$ for $d=\dist(o,v)$ with resistance $\sum_{i=0}^{d-1}\lambda^i\le\frac{\lambda^d}{\lambda-1}$. Therefore
  \[
    \Reff(o\leftrightarrow v) \le  \frac{1}{\lambda-1}\lambda^{\dist(o,v)}
  \]
  for all $v\in V$, showing that
  \[
    \Pr(\tau_v<\tau_o^+) = \frac{1}{\deg o\;\Reff(o\leftrightarrow v)} \ge \frac{\lambda-1}{\deg o}\lambda^{-\dist(o,v)}
  \]
  for all $v\in V$.

  We write $r=\frac{c\log k}{\log\lambda}$, $N=\min\set{\left|B_r\right|,\floor{\frac{\deg o}{\lambda-1}k^{1-c}}}$, and $k'=\floor{\frac{k}{N}}$. We take~$N$ distinct vertices $v_1,\dots,v_N\in B_r$, and for each $1\le j\le N$ we denote by $A_j$ the event that $Z_n$ reaches $v_j$ between times $\rho_{(j-1)k'}$ and $\rho_{jk'}$. The events $A_1,\dots,A_N$ are independent, since they depend on mutually disjoint intervals of time. Also, by the above, at each of the $k'$ excursions between $\rho_{(j-1)k'}$ and $\rho_{jk'}$ there is a probability of at least $\frac{\lambda-1}{\deg o}\lambda^{-r}=\frac{\lambda-1}{\deg o}k^{-c}$ that $R_n$ reaches $v_j$ during that excursion. Therefore
  \[
    \Pr(A_j) \ge 1 - \left(1-\frac{\lambda-1}{\deg o}k^{-c}\right)^{k'} \ge 1 - \frac{5}{4}\left(1-\frac{\lambda-1}{\deg o}k^{-c}\right)^{k/N} \ge 1 - \frac{5}{4}e^{-1} > \frac{1}{2}
  \]
  where the second inequality follows from $k\ge \left(\frac{5(\lambda-1)}{\deg o}\right)^{1/c}$ and $k'\ge\frac{k}{N}-1$, and the third inequality follows from $N\le \frac{\deg o}{\lambda-1}k^{1-c}$.

  Finally, we note that $\left|\range(Z_{\rho_k})\right| \ge \sum_{j=1}^N\ind{A_j}$, so $\left|\range(Z_{\rho_k})\right|$ stochastically dominates a binomial random variable with parameters $N$ and $\frac{1}{2}$. By Chernoff's inequality,
  \[
    \Pr\left(\left|\range(Z_{\rho_k})\right| \le \frac{N}{4}\right) \le e^{-\frac{N}{8}}
  \]
  completing the proof.
\end{proof}

\section{Stationary random walks: the general case}\label{sec:general}

We are now ready to define the stationary random walks in the general case. Let~$G$ be a finitely generated group with a finite symmetric generating set $S$, and let $H$ be a nontrivial finitely generated group with a finitely supported, symmetric and non-degenerate probability measure $\mu$. For any $\lambda\ge 1$, we define the \textbf{$\lambda$-stationary random walk} on $H\wr G$ as $R_n=((L_n(g))_{g\in G},Z_n)$, where $Z_n$ is the $\lambda$-homesick random walk on the Cayley graph $\Cay(G,S)$, and $L_{n+1}$ is obtained from $L_n$ via%
\begin{equation}
\label{eqn:L dfn2}
L_{n+1}(g) =
\begin{cases}
L_n(g) & \textrm{ if } g \not\in \{ Z_n , Z_{n+1} \} , \\
L_n(g)U_{n+1} & \textrm{ if } g=Z_n , \\
L_n(g)V_{n+1} & \textrm{ if } g=Z_{n+1},
\end{cases}
\end{equation}
where $(U_n)_n,(V_n)_n$ are $\mu$-i.i.d.\ random variables. We note that if $H$ is finite and $\mu$ is uniform on $H$, then this has the same distribution as the one defined via \eqref{eqn:L dfn}.

Since we are interested in studying the recurrence of $R_n$, we will often assume that $Z_n$ is recurrent. In this case we write $\rho_0=0,\rho_1,\dots$ for the successive times at which $Z_n$ hits $\id_G$.

\begin{rem}\label{rem:rec-tran-gen}
  As in the case of $F\wr\Z$, the random walk $R_n$ is recurrent if and only if $Z_n$ is recurrent and $\sum_{k=1}^{\infty}\Pr(R_{\rho_k}=\id)=\infty$.
\end{rem}

We next rephrase \Pref{prop:ret-prob-given-path} for the general case. To do this, we write the local time of $Z_n$ at an element $g\in G$ until time $n$ as
\[
  \xi_G(g,n) = \left|\set{0\le t\le n\suchthat Z_t=g}\right|.
\]
We further write $\mu^{*n}$ for the $n$-fold convolution power of $\mu$, i.e.,
\[
    \mu^{*n}(h) = \sum_{h_1\cdots h_n=h}\mu(h_1)\cdots\mu(h_n)
\]
for every $h\in H$ and $n\ge 1$.

\begin{prop}\label{prop:ret-prob-gen}
  Assume that $Z_n$ is recurrent. Then, for any $k\ge 1$,
  \[
    \Pr(R_{\rho_k}=\id \suchthat Z_0,\dots,Z_{\rho_k}) = \prod_{g\in \range(Z_{\rho_k})}\mu^{*(2\xi_G(g,\rho_k))}(\id_H).
  \]
\end{prop}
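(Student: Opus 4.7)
The plan is to condition on the trajectory $Z_0,\dots,Z_{\rho_k}$ and identify the conditional distribution of each lamp $L_{\rho_k}(g)$ separately. First I would reduce the event $\{R_{\rho_k}=\id\}$ to $\{L_{\rho_k}(g)=\id_H \text{ for every } g\}$, which is immediate because $Z_{\rho_k}=\id_G$ by definition of $\rho_k$. Moreover, since lamps outside $\range(Z_{\rho_k})$ are never touched by \eqref{eqn:L dfn2} and start at $\id_H$, only the lamps at vertices in the range contribute to the event.

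The heart of the argument is to unfold \eqref{eqn:L dfn2}: each transition $n\mapsto n+1$ right-multiplies the lamp at $Z_n$ by $U_{n+1}$ and the lamp at $Z_{n+1}$ by $V_{n+1}$, leaving every other lamp unchanged. Because $\Cay(G,S)$ has no self-loops, the two updates at a given transition occur at distinct vertices, and across the whole trajectory each variable $U_{n+1}$ (resp.\ $V_{n+1}$) enters the defining product of exactly one lamp. Therefore, conditional on the path, (i) the lamps at different vertices of $\range(Z_{\rho_k})$ are built from pairwise disjoint subsequences of the i.i.d.\ family $(U_n,V_n)_n$ and are hence mutually independent, and (ii) each $L_{\rho_k}(g)$ is a time-ordered product of $c(g)$ i.i.d.\ $\mu$-distributed variables, where $c(g)$ counts the total number of updates received at $g$. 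By an elementary induction using $\mu^{*c}=\mu^{*(c-1)}*\mu$, such a product has distribution $\mu^{*c(g)}$, so
\[
\Pr(L_{\rho_k}(g)=\id_H \mid Z_0,\dots,Z_{\rho_k}) = \mu^{*c(g)}(\id_H),
\]
and multiplying over $g\in\range(Z_{\rho_k})$ reduces the problem to evaluating $c(g)$.

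The count is routine: by construction $c(g)$ is the number of $n\in\{0,\dots,\rho_k-1\}$ with $Z_n=g$ (contributions from the $U$'s) plus the number of $n\in\{1,\dots,\rho_k\}$ with $Z_n=g$ (contributions from the $V$'s). For $g\in\range(Z_{\rho_k})$ with $g\ne\id_G$ both counts equal $\xi_G(g,\rho_k)$ since the endpoints $Z_0,Z_{\rho_k}$ both equal $\id_G\ne g$, giving $c(g)=2\xi_G(g,\rho_k)$ and hence the stated formula.

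The only point that requires any care is the non-commutativity of $H$: the lamps genuinely are \emph{time-ordered} products of the $U$'s and $V$'s rather than unordered sums, and one must resist commuting factors. This is the main (mild) conceptual obstacle, but it is harmless because the distribution of a product of i.i.d.\ group-valued variables is determined purely by the number of factors, which is exactly what the convolution notation $\mu^{*c(g)}$ encodes.
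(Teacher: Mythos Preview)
Your approach is exactly the paper's: condition on the path, note that the lamps at distinct vertices of $\range(Z_{\rho_k})$ are built from disjoint subfamilies of the i.i.d.\ sequence $(U_n,V_n)_n$ and are therefore conditionally independent, and identify each lamp's law as a convolution power of~$\mu$.

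The one gap is that you verify $c(g)=2\xi_G(g,\rho_k)$ only for $g\neq\id_G$ and then write ``hence the stated formula'', leaving the identity vertex unchecked. In fact your own counting formula gives $c(\id_G)=k+k=2k$: the times $n\in\{0,\dots,\rho_k-1\}$ with $Z_n=\id_G$ are $0,\rho_1,\dots,\rho_{k-1}$, and the times $n\in\{1,\dots,\rho_k\}$ with $Z_n=\id_G$ are $\rho_1,\dots,\rho_k$. But by the paper's definition $\xi_G(\id_G,\rho_k)=|\{0\le t\le\rho_k:Z_t=\id_G\}|=k+1$, so $2\xi_G(\id_G,\rho_k)=2k+2\neq c(\id_G)$. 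Thus the exponent in the displayed product is off by~$2$ at the single vertex $g=\id_G$. The paper's own proof contains the same slip: it writes $L_{\rho_k}(\id_G)=U_1\bigl(\prod_{i=1}^{k-1}V_{\rho_i}U_{\rho_i+1}\bigr)V_{\rho_k}$, which visibly has $2k$ factors, and then asserts the count is $2\xi_G(g,\rho_k)$ for every~$g$. This discrepancy is immaterial for all later uses of the proposition (when $\mu$ is uniform on a finite $H$ one has $\mu^{*n}(\id_H)=|H|^{-1}$ for every $n\ge1$, and in the infinite-$H$ case only the order of magnitude of the exponent enters), but you should either record the correct exponent $2\xi_G(\id_G,\rho_k)-2$ at $g=\id_G$ or at least not claim the case has been handled.
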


\begin{proof}
  This is essentially the same proof as the proof of \Pref{prop:ret-prob-given-path}. For any $g\in G$ we write
  \[
    \Xi(g,\rho_k) = \set{0\le t< \rho_k\suchthat Z_t=g}.
  \]
  By the definition of $R_n$, if $g\notin\range(Z_{\rho_k})$ then $L_{\rho_k}(g)=\id_H$. If $\id_G\ne g\in \range(Z_{\rho_k})$, then
  \[
    L_{\rho_k}(g) = \prod_{t\in\Xi(g,\rho_k)}V_tU_{t+1},
  \]
  whereas for $g=\id_G$ we have
  \[
    L_{\rho_k}(\id_G) = U_1\left(\prod_{i=1}^{k-1} V_{\rho_i}U_{\rho_i+1}\right)V_{\rho_k}.
  \]
  It follows that, conditional on $Z_0,\dots,Z_{\rho_k}$, the variables $(L_{\rho_k}(g))_{g\in\range(Z_{\rho_k})}$ are independent. Furthermore, the distribution of $L_{\rho_k}(g)$ is the product of $2\xi_G(g,\rho_k)$ independent $\mu$-random variables, and thus $\Pr(L_{\rho_k}(g)=\id_H) = \mu^{*(2\xi_G(g,\rho_k))}(\id_H)$. We therefore have
  \begin{align*}
    \Pr(R_{\rho_k}=\id \suchthat Z_0,\dots,Z_{\rho_k}) &= \Pr(\forall g\in G:L_{\rho_k}(g)=\id_H) \\
    & = \prod_{g\in \range(Z_{\rho_k})}\mu^{*(2\xi_G(g,\rho_k))}(\id_H),
  \end{align*}
  completing the proof.
\end{proof}

Before studying stationary random walks further, we describe them via an appropriate electrical network.

\begin{obs}
  The $\lambda$-stationary random walk $R_n$ on $H\wr G$ can also be described using an electrical network, where the underlying graph $\Gamma$ of the network is the Cayley graph of $H\wr G$ with respect to the switch-walk-switch generators defining~$R_n$. In details, two elements $a=((L(g))_g,x)\in H\wr G$ and $a'=((L'(g))_g,x')\in H\wr G$ are connected by an edge in $\Gamma$ if and only if:
  \begin{itemize}
    \item $x'=xs$ for some $s\in S$;
    \item $L(g)=L'(g)$ for all $g\in G\setminus\set{x,x'}$;
    \item and $\mu(L(g)^{-1}L'(g))>0$ for both $g\in\set{x,x'}$.
  \end{itemize}
  The resistance of the edge $e=(a,a')$ is then achieved by multiplying the resistances of the $\mu$-random walk twice (one time for each switch) and the resistance from $Z_n$, i.e.,
  \[
    r_e=\lambda^{\min\{|x|,|x'|\}}\mu(L(x)^{-1}L'(x))^{-1}\mu(L(x')^{-1}L'(x'))^{-1}.
  \]
\end{obs}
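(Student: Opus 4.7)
The plan is to verify directly that the random walk associated to the proposed electrical network on $\Gamma$ has the same one-step transition kernel as $R_n$. First I would write out the $R_n$-transition probability from $a=((L(g))_g,x)$ to $a'=((L'(g))_g,x')$ using \eqref{eqn:L dfn2}: it is positive exactly when $x'=xs$ for some $s\in S$ and $L(g)=L'(g)$ for all $g\notin\set{x,x'}$, and in this case it equals
\[
  T_G(x,x')\cdot\mu(L(x)^{-1}L'(x))\cdot\mu(L(x')^{-1}L'(x')),
\]
where $T_G$ denotes the one-step distribution of the $\lambda$-homesick walk $Z_n$ on $\Cay(G,S)$. This already matches the stated edge set of $\Gamma$, so the supports of the two walks agree.

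Next I would recall that $T_G$ is itself the random walk of the electrical network on $G$ with edge-resistance $\lambda^{\min\set{|v|,|w|}}$, so $T_G(x,xs)=\lambda^{-\min\set{|x|,|xs|}}/\pi_G(x)$ with $\pi_G(x)=\sum_{s\in S}\lambda^{-\min\set{|x|,|xs|}}$. With this in hand I would compute the vertex weight $\pi(a)=\sum_{a'\sim a}r_e^{-1}$ of the proposed network on $H\wr G$. The neighbours of $a$ are parametrised by a choice of $s\in S$ together with a pair $h_1,h_2\in\supp\mu$ recording the lamp updates at $x$ and $xs$, so
\[
  \pi(a)=\sum_{s\in S}\lambda^{-\min\set{|x|,|xs|}}\Bigl(\sum_{h_1\in H}\mu(h_1)\Bigr)\Bigl(\sum_{h_2\in H}\mu(h_2)\Bigr)=\pi_G(x),
\]
where the last equality uses that $\mu$ is a probability measure. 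Dividing the conductance $r_e^{-1}$ by $\pi(a)=\pi_G(x)$ then reproduces exactly the transition probability of $R_n$ obtained above.

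The whole argument is a bookkeeping exercise; I do not expect any genuine obstacle. The one point worth isolating is the multiplicative form of $r_e$: it forces the two $H$-sums in $\pi(a)$ to collapse to $1$ independently of the $G$-structure, which is precisely what allows the $G$-normalisation $\pi_G(x)$ to survive and recover $T_G$. As a final sanity check one should also verify that the proposed edge set is well defined, i.e.\ the criteria for $(a,a')$ to be an edge are symmetric in $a,a'$; this follows from the symmetry of $S$ and of $\mu$, and from the obvious symmetry of $\min\set{|x|,|x'|}$ in the resistance formula.
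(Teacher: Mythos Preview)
Your verification is correct and complete. The paper itself offers no proof of this statement: it is labelled an Observation and simply asserted, then immediately used in the proofs of Propositions~\ref{prop:independent} and~\ref{prop:monotonicity}. So you have supplied more than the paper does; your computation that $\pi(a)=\pi_G(x)$ via the collapse of the two $\mu$-sums is exactly the point that makes the observation work, and the paper leaves this implicit.
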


We apply this viewpoint to prove two claims on the recurrence of the stationary random walks. First, we show that the recurrence of $R_n$ does not depend on the choice of the measure $\mu$, and in fact only depends on $H$ via its order $\left|H\right|$.

\begin{prop}\label{prop:independent}
  Let $G$ be an infinite finitely generated group with a finite symmetric generating set $S$, and let $\lambda\ge 1$ be a real number. Let $H,H'$ be two finite groups of the same order, and let $\mu,\mu'$ be symmetric and non-degenerate probability measures on $H,H'$ respectively. Write $R_n,R_n'$ for the $\lambda$-stationary random walks on $H\wr G,H'\wr G$ with respect to $\mu,\mu'$. Then $R_n$ is recurrent if and only if $R_n'$ is recurrent.
\end{prop}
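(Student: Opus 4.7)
The plan is to use the electrical-network description of the stationary walks (introduced in the Observation preceding the proposition) together with a two-step reduction: first replace $\mu$ and $\mu'$ by the uniform measures $\mu_0$ and $\mu_0'$ on $H$ and $H'$, and then compare the two uniform-switch walks using Proposition~\ref{prop:ret-prob-gen} and the hypothesis $|H|=|H'|$.

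For the reduction from a general $\mu$ on $H$ to the uniform measure $\mu_0$ on $H$, one direction is immediate from Cauchy--Schwarz. Since $\mu$ is symmetric we have $\mu^{*n}(h^{-1})=\mu^{*n}(h)$, and therefore $\mu^{*(2n)}(\id_H)=\sum_{h\in H}\mu^{*n}(h)^2\ge 1/|H|$ for every $n\ge 1$. By Proposition~\ref{prop:ret-prob-gen},
\[
\Pr(R_{\rho_k}^{\mu}=\id\mid Z_0,\dots,Z_{\rho_k}) \;\ge\; |H|^{-|\range(Z_{\rho_k})|} \;=\; \Pr(R_{\rho_k}^{\mu_0}=\id\mid Z_0,\dots,Z_{\rho_k}),
\]
and summing over $k$ together with \Rref{rem:rec-tran-gen} yields that recurrence of $R^{\mu_0}$ forces recurrence of $R^{\mu}$. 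For the converse, I would use the electrical-network viewpoint together with a canonical-paths comparison. Since $\supp\mu$ generates $H$, any single $\mu_0$-edge between $(L,x)$ and $(L',xs)$ that prescribes lamp changes $(u,v)\in H\times H$ at positions $x$ and $xs$ can be simulated by a path of bounded length $m=m(H,\mu)$ in the $\mu$-network obtained by alternating $m$ switch-walk-switch steps between $x$ and $xs$, with the switches chosen in $\supp\mu$ so that their products equal $u$ at position $x$ and $v$ at position $xs$. Since the simulating path stays with $G$-projections in $\{x,xs\}$, the $\lambda^{\min(|x|,|xs|)}$ resistance factor is unchanged, and the cumulative path resistance is bounded by a constant (depending only on $H,\mu,\lambda$) times the original $\mu_0$-edge resistance. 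A canonical-paths comparison then yields $\Reff(\id\leftrightarrow\infty;\Gamma_\mu)\le C\cdot\Reff(\id\leftrightarrow\infty;\Gamma_{\mu_0})$, so transience of $R^{\mu_0}$ forces transience of $R^\mu$.

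For the second step, observe that the uniform measure $\mu_0$ satisfies $\mu_0^{*n}(\id_H)=1/|H|$ for every $n\ge 1$, since the convolution of the uniform distribution with any probability distribution on $H$ returns the uniform distribution. Hence Proposition~\ref{prop:ret-prob-gen} gives
\[
\Pr(R_{\rho_k}^{\mu_0}=\id\mid Z_0,\dots,Z_{\rho_k}) \;=\; |H|^{-|\range(Z_{\rho_k})|},
\]
which depends on $H$ only through $|H|$. Since the distribution of $(Z_n)_n$ depends only on $\lambda$ and $G$, the unconditional return probabilities agree between $H\wr G$ and $H'\wr G$ whenever $|H|=|H'|$, and by \Rref{rem:rec-tran-gen} the two uniform-switch walks share the same recurrence.

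The main obstacle is the converse direction of the first step: explicitly constructing the simulating $\mu$-paths and controlling their cumulative resistances uniformly over the base-point $x$ (so that the bound on $\Reff$ is position-free). Some additional care is required if $\mu$ is periodic (in the sense that $\supp\mu$ lies in a coset of a proper subgroup of $H$), since the path length $m$ must then match the appropriate parity for the target lamp changes; in the aperiodic case the simulation is direct.
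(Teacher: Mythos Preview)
Your two-step reduction through the uniform measure is a genuinely different route from the paper's. The paper compares $\mu$ and $\mu'$ directly: it fixes an arbitrary bijection $f\colon H\to H'$ (extended lampwise to $\tilde f\colon H\wr G\to H'\wr G$), chooses an odd $k$ with $\supp(\mu^{*k})=H$, and shows that every edge of the $\mu'$-network $\Gamma'$ pulls back under $\tilde f$ to an edge of the $k$-step $\mu$-network $\Gamma^{(k)}$ with comparable resistance, by bounding $\Pr(R_k=a_2\mid R_0=a_1)$ from below along the $G$-path that alternates $k$ times between $x_1$ and $x_2=x_1s$; Rayleigh then yields $\Reff(\id\leftrightarrow\infty;\Gamma^{(k)})\le C\,\Reff(\id\leftrightarrow\infty;\Gamma')$. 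Your argument is more modular: the Cauchy--Schwarz bound $\mu^{*2n}(\id_H)\ge |H|^{-1}$ together with Proposition~\ref{prop:ret-prob-gen} handles one direction with no network comparison at all, and your second step identifies the two uniform walks \emph{exactly} rather than up to constants. Your canonical-paths step is essentially the paper's alternating-path lower bound recast as a flow comparison.

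The periodicity caveat you raise is not a matter of extra bookkeeping, however --- it is a hard obstruction, and the proposition as literally stated fails there. If $\supp\mu\subseteq gN$ for an index-two subgroup $N\lhd H$ with $g\notin N$, then along \emph{any} $\mu$-path from $(L,x)$ to $(L',x')$ with $x\neq x'$ the number of switches at the initial and terminal base positions is $2V-1$ (odd), so the lamp increments there are forced into $gN$; no choice of path length or detour through other neighbours can change this parity. Equivalently, writing $\chi\colon H\twoheadrightarrow H/N\cong\Z/2\Z$, the quantity $J((L,x))=\chi\circ L+\delta_x\in\bigoplus_G\Z/2\Z$ is conserved by the $\mu$-walk. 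In the extreme case $H=\Z/2\Z$, $\mu=\delta_1$ (symmetric and generating), this forces $L_n=\delta_{\id_G}+\delta_{Z_n}$ deterministically, so $R_n=\id$ if and only if $Z_n=\id_G$: the $\mu$-walk is recurrent for every $\lambda>1$, while the uniform-switch walk is transient for $\lambda<|H|^2$ by Theorem~\ref{thm:Z-phase}. The paper's own proof has exactly the same gap --- its choice of ``an odd integer $k$ with $\supp(\mu^{*k})=H$'' is precisely what is unavailable in the periodic case. Both your argument and the paper's are correct under the additional hypothesis that $\mu$ is aperiodic (equivalently, $\supp(\mu^{*k})=H$ for all large $k$), which is presumably the intended meaning of ``non-degenerate''.
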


\begin{proof}
  By symmetry, it suffices to show that recurrence of $R_n$ implies recurrence of $R_n'$. Denote by $\Gamma=(H\wr G,E,(r_e)_e)$ and $\Gamma'=(H'\wr G,E',(r'_e)_e)$ the electrical networks corresponding to $(R_n)_n$ and $(R_n')_n$. For a vertex $a=((L(g))_g,x)\in H\wr G$, the total conductance of $\Gamma$ at $a$ satisfies
  \[
    \alpha^2\lambda^{-|x|-1} \le c_a=\sum_{(a,b)\in E} r_{(a,b)}^{-1} \le \lambda^{-|x|+1},
  \]
  where $\alpha=\min_{h\in\supp(\mu)}\mu(h)$. Similarly, for $a'=((L'(g))_g,x)\in H'\wr G$ we have
  \[
    \beta^2\lambda^{-|x|-1} \le c'_{a'}=\sum_{(a',b')\in E'} (r'_{(a',b')})^{-1} \le \lambda^{-|x|+1},
  \]
  where $\beta=\min_{h\in\supp(\mu')}\mu'(h)$.

  Fix a bijection $f\colon H\to H'$, which extends to a bijection $\tilde f\colon H\wr G\to H'\wr G$ by applying $f$ on each lamp. From the above bounds it follows that
  \[
    \frac{\alpha^2}{\lambda^2} \le \frac{c_a}{c'_{\tilde{f}(a)}} \le \frac{\lambda^2}{\beta^2}
  \]
  for every $a\in H\wr G$.

  Choose an odd integer $k\ge 1$ with $\supp(\mu^{*k})=H$ (this is possible since $\mu$ is symmetric and non-degenerate), and let $\Gamma^{(k)}=(H\wr G,E^{(k)},(r^{(k)}_e)_e)$ be the network corresponding to $(R_{kn})_n$. We claim that there exists a positive number $C>0$, depending only on $|S|,\lambda,\mu,\mu'$, such that
  \begin{equation}\label{eq:res-ineq}
    r^{(k)}_{(a_1,a_2)} \le C\, r'_{(\tilde{f}(a_1),\tilde{f}(a_2))}
  \end{equation}
  for all $a_1,a_2\in H\wr G$.

  To see this, write $a_i=((L_i(g))_g,x_i)$ for $i=1,2$. If $(\tilde{f}(a_1),\tilde{f}(a_2))\notin E'$, then~ \eqref{eq:res-ineq} is trivial, so we assume $(\tilde{f}(a_1),\tilde{f}(a_2))\in E'$. We note that
  \[
    \Pr(R'_1=\tilde{f}(a_2)\suchthat R'_0=\tilde{f}(a_1)) \le \Pr(Z_1=x_2\suchthat Z_0=x_1) \le \frac{\lambda}{\lambda+|S|-1},
  \]
  while
  \begin{align*}
    \Pr(R_k=a_2\suchthat R_0=a_1) &= \mu^{*k}(L_1(g_1)^{-1}L_2(g_1)) \,\mu^{*k}(L_1(g_2)^{-1}L_2(g_2)) \,\Pr(Z_k=x_2\suchthat Z_0=x_1) \\
    &\ge \gamma^2 \,\Pr(Z_1=x_2\suchthat Z_0=x_1)^{(k+1)/2}\,\Pr(Z_1=x_2\suchthat Z_0=x_1)^{(k-1)/2} \\
    &\ge \frac{\gamma^2}{(\lambda+|S|-1)^k}
  \end{align*}
  for $\gamma=\min_{h\in H}\mu^{*k}(h)>0$. Therefore
  \begin{align*}
    r^{(k)}_{(a_1,a_2)} &= \frac{1}{c_{a_1}\Pr(R_k=a_2\suchthat R_0=a_1)}
    \le \frac{(\lambda+|S|-1)^k}{\gamma^2 c_{a_1}} \\
    &\le \frac{\lambda^2(\lambda+|S|-1)^k}{\alpha^2\gamma^2 c'_{\tilde{f}(a_1)}}
    \le \frac{\lambda^3(\lambda+|S|-1)^{k-1}}{\alpha^2\gamma^2 c'_{\tilde{f}(a_1)}\,\Pr(R'_1=\tilde{f}(a_2)\suchthat R'_0=\tilde{f}(a_1))} \\
    &= \frac{\lambda^3(\lambda+|S|-1)^{k-1}}{\alpha^2\gamma^2}\,r'_{(\tilde{f}(a_1),\tilde{f}(a_2))},
  \end{align*}
  establishing \eqref{eq:res-ineq}.

  By Rayleigh's monotonicity law,
  \[
    \Reff(\id\leftrightarrow\infty;\Gamma^{(k)}) \le C\,\Reff(\tilde{f}(\id)\leftrightarrow\infty;\Gamma').
  \]
  Since $R_n$ is recurrent, so is $R_{kn}$, and therefore
  \[
    \Reff(\id\leftrightarrow\infty;\Gamma^{(k)})=\infty,
  \]
  which forces
  \[
    \Reff(\tilde{f}(\id)\leftrightarrow\infty;\Gamma')=\infty.
  \]
  Thus $R_n'$ is recurrent.
\end{proof}

Next, we show that recurrent is a monotone property in $\lambda$:

\begin{prop}\label{prop:monotonicity}
  Let $G$ be a finitely generated group, let $H$ be a nontrivial finite group, and let $\lambda>1$ be a real number. If the $\lambda$-stationary random walk on $H\wr G$ is recurrent, then so is the $\lambda'$-stationary random walk for every $\lambda'>\lambda$.
\end{prop}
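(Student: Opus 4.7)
The plan is to read this off the electrical network description of the stationary random walk given in the observation immediately preceding the statement, combined with Rayleigh's monotonicity law. The key observation is that the underlying graph $\Gamma$ of the network depends only on $G$, $S$, $H$ and the switch measure $\mu$, but not on $\lambda$; only the edge resistances change with $\lambda$, and they change monotonically.

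Concretely, I would fix the same switch measure $\mu$ for both walks and consider the networks $\Gamma_\lambda$ and $\Gamma_{\lambda'}$ on the vertex set $H\wr G$. The edge relation $(((L(g))_g,x),((L'(g))_g,x'))$ is the one spelled out in the observation: $x'=xs$ for some $s\in S$, the lamps agree off $\{x,x'\}$, and the two transition probabilities $\mu(L(x)^{-1}L'(x))$, $\mu(L(x')^{-1}L'(x'))$ are positive. None of these conditions involve $\lambda$, so $\Gamma_\lambda$ and $\Gamma_{\lambda'}$ have the same edge set. For each such edge $e$,
\[
  r_e^{(\lambda)} = \lambda^{\min\{|x|,|x'|\}}\,\mu(L(x)^{-1}L'(x))^{-1}\,\mu(L(x')^{-1}L'(x'))^{-1},
\]
and since $\min\{|x|,|x'|\}\ge 0$ and $1\le \lambda<\lambda'$, we get $r_e^{(\lambda')}\ge r_e^{(\lambda)}$ for every edge $e$.

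Rayleigh's monotonicity law then yields
\[
  \Reff(\id\leftrightarrow\infty;\Gamma_{\lambda'})\;\ge\;\Reff(\id\leftrightarrow\infty;\Gamma_\lambda),
\]
and since the $\lambda$-stationary walk is assumed recurrent the right-hand side is infinite, so the left-hand side is infinite and the $\lambda'$-stationary walk is recurrent as well. There is essentially no obstacle in this argument: the only point worth stating carefully is that the edge set of $\Gamma$ really is independent of $\lambda$, which is immediate from the observation, and that $\min\{|x|,|x'|\}\ge 0$, which holds trivially. The whole proof thus collapses to one or two lines once the electrical network viewpoint is in hand.
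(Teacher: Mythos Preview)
Your proposal is correct and essentially identical to the paper's own proof: both invoke the electrical network description, observe that the underlying graph is independent of $\lambda$ while each edge resistance $r_e=\lambda^{\min\{|x|,|x'|\}}\mu(\cdot)^{-1}\mu(\cdot)^{-1}$ is monotone in $\lambda$, and then apply Rayleigh's monotonicity law to conclude that $\Reff(\id\leftrightarrow\infty)$ remains infinite for $\lambda'>\lambda$. If anything, you spell out slightly more detail (the nonnegativity of the exponent) than the paper does.
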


\begin{proof}
  Let $\Gamma$ and $\Gamma'$ denote the electrical networks representing the $\lambda$- and $\lambda'$-stationary random walks on $H\wr G$ respectively. These networks have the same underlying graph, and the resistances on the edges satisfy $r_e\le r'_e$ for all $e\in E$. The assumption that the $\lambda$-stationary random walk on $H\wr G$ is recurrent implies
  \[
    \Reff(\id\leftrightarrow\infty;\Gamma) = \infty.
  \]
  By Rayleigh's monotonicity law, it follows that
  \[
    \Reff(\id\leftrightarrow\infty;\Gamma) \le \Reff(\id\leftrightarrow\infty;\Gamma'),
  \]
  so
  \[
    \Reff(\id\leftrightarrow\infty;\Gamma') = \infty
  \]
  as well, which in turn implies that the $\lambda'$-stationary random walk on $H\wr G$ is recurrent.
\end{proof}

\section{Proofs of \Tref{thm:virt-Z} and \Tref{thm:no-transition}}\label{sec:proofs}

We are now ready to prove \Tref{thm:virt-Z} and \Tref{thm:no-transition}. Throughout the section, we fix an infinite finitely generated group $G$ with a finite symmetric generating set $S$. We also fix a nontrivial and finitely generated group $H$, and a finitely supported, symmetric and non-degenerate probability measure $\mu$ on $H$. For a given $\lambda\ge 1$, we write $R_n$ for the $\lambda$-stationary random walk on $H\wr G$. Throughout this section, we write $B_r$ for the ball of radius $r$ around $\id_G$ in the Cayley graph of $G$.

We begin with \Tref{thm:virt-Z}, stating that when $G$ is virtually-$\Z$ and $H$ is finite, there is a phase transition in the recurrence of the stationary random walks.

\begin{proof}[Proof of \Tref{thm:virt-Z}]
  Assume that $G$ is virtually-$\Z$ and that $H$ is finite. By \Pref{prop:independent}, we may assume that $\mu$ is the uniform measure on $H$. We will prove that there exist $1<\lambda_1<\lambda_2<\infty$ such that $R_n$ is transient if $\lambda<\lambda_1$, and is recurrent if $\lambda\ge\lambda_2$. We note that by the result of Lyons \cite{Lyons95}, $Z_n$ is recurrent since $G$ has linear growth.

  We first prove the existence of $\lambda_1$. Since $G$ is infinite, $\left|B_r\right|\ge r$ for every $r\ge 1$. Therefore, by \Pref{prop:homesick-range-down} with $c=\frac{1}{2}$, for sufficiently large $k$ we have
  \[
    \Pr\left(\left|\range(Z_{\rho_k})\right| \le \frac{1}{4}N_k\right) \le e^{-\frac{N_k}{8}}
  \]
  for
  \[
    N_k = \min\set{\left|B_{\frac{\log k}{2\log\lambda}}\right|,\floor{\frac{\lambda-1}{\left|S\right|}\sqrt{k}}} \ge \min\set{\frac{\log k}{2\log\lambda}, \floor{\frac{\lambda-1}{\left|S\right|}\sqrt{k}}}.
  \]
  When $k$ is sufficiently large we have $N_k \ge \frac{\log k}{2\log\lambda}$, so \Pref{prop:ret-prob-gen} implies
  \begin{align*}
    \Pr(R_{\rho_k}=\id) & \le \Pr\left(R_{\rho_k}=\id\suchthat \left|\range(Z_{\rho_k})\right|\ge \frac{N_k}{4}\right) + \Pr\left(\left|\range(Z_{\rho_k})\right|\le \frac{N_k}{4}\right) \\
    & \le \left|H\right|^{-N_k/4} + e^{-N_k/8} \le \left|H\right|^{-\log k/8\log\lambda} + e^{-\log k/16\log\lambda} \\
    & = k^{-\log\left|H\right|/8\log\lambda} + k^{-1/16\log\lambda} \le 2k^{-\log\left|H\right|/8\log\lambda}
  \end{align*}
  for sufficiently large $k$. Therefore, if $\lambda<\lambda_1\coloneqq\left|H\right|^{1/8}$, then $\sum_{k=1}^{\infty}\Pr(R_{\rho_k}=\id)<\infty$, and thus $R_n$ is transient for all $\lambda<\lambda_1$.

  We turn to prove the existence of $\lambda_2$. We note that since $G$ is virtually-$\Z$, it follows that $G$ and $\Z$ are quasi-isometric. In particular, there exists $M>0$ such that $G$ has at most $M$ edges of any given distance from $\id_G$, i.e., $\left|\partial_E B_r\right|\le M$ for all $r\ge 1$. Furthermore, there exists a constant $C>0$ such that $\left|B_r\right|\le Cr$ for all $r\ge 1$.

  We will show that $\range(Z_{\rho_k})$ is contained in a small ball with high probability. To do this, we first apply \Pref{prop:homesick-range-up} and deduce that
  \[
    \Pr\left(\max_{i\le\rho_1}\left|Z_i\right|\ge r\right) \le \frac{1}{\left|S\right|}\left(\sum_{i=0}^{r-1}M^{-1}\lambda^i\right)^{-1} \le \frac{M}{\left|S\right|\lambda^{r-1}}.
  \]
  Fix $k\ge 2$. Since the different excursions are independent, it follows that
  \[
    \Pr\left(\max_{i\le\rho_k}\left|Z_i\right|<r\right) \ge \left(1 - \frac{M}{\left|S\right|\lambda^{r-1}}\right)^k
  \]
  for every $r\ge 1$. In particular, taking $r=r_k\coloneqq\frac{\log (kM/\left|S\right|)}{\log\lambda}+1$, we have
  \[
    \Pr\left(\max_{i\le\rho_k}\left|Z_i\right|<r_k\right) \ge \left(1 - \frac{1}{k}\right)^k \ge \frac{1}{4}.
  \]
  We note that if $\max_{i\le\rho_k}\left|Z_i\right|< r_k$, then $\range(Z_{\rho_k})\sub B_{r_k-1}$, and thus $\left|\range(Z_{\rho_k})\right| \le \left|B_{r_k-1}\right|\le C(r_k-1)$. Therefore
  \[
    \Pr\left(\left|\range(Z_{\rho_k})\right| \le C(r_k-1) \right)\ge \frac{1}{4},
  \]
  so we apply \Pref{prop:ret-prob-gen} again and deduce
  \begin{align*}
    \Pr(R_{\rho_k}=\id) & \ge \Pr\left(R_{\rho_k}=\id\suchthat \left|\range(Z_{\rho_k})\right|\le C(r_k-1)\right)\Pr\left(\left|\range(Z_{\rho_k})\right|\le C(r_k-1)\right) \\
    & \ge \frac{1}{4}\left|H\right|^{-C(r_k-1)} = \frac{1}{4}\left|H\right|^{-\frac{C\log k}{\log\lambda}+C'} = \frac{1}{4}\left|H\right|^{C'}k^{-\frac{C\log\left|H\right|}{\log\lambda}}.
  \end{align*}
  It follows that if $\lambda\ge\lambda_2\coloneqq\left|H\right|^C$ then $\sum_{k=1}^{\infty}\Pr(R_{\rho_k}=\id)=\infty$, and thus $R_n$ is recurrent for all $\lambda\ge\lambda_2$.

  We have proven the existence of $1<\lambda_1<\lambda_2<\infty$ such that the $\lambda$-stationary random walk $R_n$ is transient if $\lambda<\lambda_1$, and is recurrent if $\lambda\ge\lambda_2$, and thus the theorem follows by \Pref{prop:monotonicity}.
\end{proof}

We now turn to prove \Tref{thm:no-transition}, i.e., that there is no phase transition if $G$ is not virtually-$\Z$ or if $H$ is infinite. We note that for $\lambda=1$, the $\lambda$-stationary random walk on $H\wr G$ is just a simple random walk on this group, which is known to be transient. We therefore focus on the case $\lambda>1$, and prove the theorem under each assumption separately.

\begin{proof}[Proof of \Tref{thm:no-transition}, $G$ non-virtually-$\Z$ case]
  Suppose that $G$ is not virtually-$\Z$ and $H$ is finite. Again by \Pref{prop:independent}, we may assume that $\mu$ is the uniform measure on $H$. By Gromov's theorem and the Bass--Guivarc'h formula, it follows that the growth of $G$ is at least quadratic, so in particular $\left|B_r\right|\ge r^{3/2}$ for sufficiently large $r$.

  Fix $\lambda>1$, and let $R_n=((L_n(g))_g,Z_n)$ denote the $\lambda$-stationary random walk on $H\wr G$. If $Z_n$ is transient, then $R_n$ is transient as well, so we assume that $Z_n$ is recurrent.

  We apply \Pref{prop:homesick-range-down} with $c=\frac{1}{2}$ to deduce that for sufficiently large $k$,
  \[
    \Pr\left(\left|\range(Z_{\rho_k})\right|\le \frac{N_k}{4}\right) \le e^{-\frac{N_k}{8}}
  \]
  for
  \[
    N_k = \min\set{\left|B_{\frac{\log k}{2\log\lambda}}\right|,\floor{\frac{\lambda-1}{\left|S\right|}\sqrt{k}}},
  \]
  so for sufficiently large $k$ we have
  \[
    N_k \ge \left(\frac{\log k}{2\log\lambda}\right)^{3/2} \eqqcolon C\log^{3/2}k.
  \]
  Applying \Pref{prop:ret-prob-gen}, we deduce that
  \begin{align*}
    \Pr(R_{\rho_k}=\id) & \le \Pr\left(R_{\rho_k}=\id\suchthat \left|\range(Z_{\rho_k})\right|\ge \frac{N_k}{4}\right) + \Pr\left(\left|\range(Z_{\rho_k})\right|\le \frac{N_k}{4}\right) \\
    & \le \left|H\right|^{-N_k/4} + e^{-N_k/8} \le \left|H\right|^{-C\log^{3/2}k/4} + e^{-C\log^{3/2}k/8},
  \end{align*}
  and thus $\sum_{k=1}^{\infty}\Pr(R_{\rho_k}=\id)<\infty$. By \Rref{rem:rec-tran-gen}, it follows that $R_n$ is transient, and this holds for every $\lambda>1$ as required.
\end{proof}

We finally prove:

\begin{proof}[Proof of \Tref{thm:no-transition}, $H$ infinite case]
  We assume now that $H$ is infinite, and thus there is a constant $C>0$ such that
  \begin{equation}\label{eq:ret-prob-H}
    \mu^{*2t}(\id_H) \le \frac{C}{\sqrt{t}}
  \end{equation}
  for all $t\ge 1$.

  For each $g\in S$, we write
  \[
    N_{g,k} = \left|\set{0\le i\le k-1\suchthat Z_{\rho_i+1}=g}\right|.
  \]
  Since the different excursions are independent, each $N_{g,k}$ is a binomial random variable with parameters $k$ and $\frac{1}{\left|S\right|}$. Writing $A=\set{\forall g\in S:N_{g,k}\ge\frac{k}{2\left|S\right|}}$, we may apply Chernoff's inequality and deduce
  \[
    \Pr\left(A^c\right) \le \left|S\right|e^{-\frac{k}{12\left|S\right|}}.
  \]

  Write $\overline{S}=S\cup\set{\id_G}$. \Pref{prop:ret-prob-gen} shows that
  \[
    \Pr(R_{\rho_k}=\id \suchthat Z_0,\dots,Z_{\rho_k}) \le \prod_{g\in\overline{S}}\mu^{*(2\xi_G(g,\rho_k))}(\id_H).
  \]
  We note that $\xi_G(s,\rho_k)\ge N_{s,k}$ for every $s\in S$, and that $\xi_G(\id_G,\rho_k)=k$; therefore, combining with \eqref{eq:ret-prob-H},
  \[
    \Pr(R_{\rho_k}=\id \suchthat A) \le \frac{C}{\sqrt{2k}}\left(\frac{C}{\sqrt{k/\left|S\right|}}\right)^{\left|\overline{S}\right|-1} = \frac{C'}{k^{\left|\overline{S}\right|/2}}.
  \]
  We can now deduce that
  \begin{align*}
    \Pr(R_{\rho_k}=\id) & \le \Pr(R_{\rho_k}=\id \suchthat A) + \Pr(A^c) \le \frac{C'}{k^{\left|\overline{S}\right|/2}} + \left|S\right|e^{-\frac{k}{12\left|S\right|}}.
  \end{align*}
  Since $G$ is infinite, it follows that $\left|\overline{S}\right|\ge 3$, hence $\sum_{k=1}^{\infty}\Pr(R_{\rho_k}=\id)<\infty$. This shows that $R_n$ is transient for every $\lambda>1$, completing the proof.
\end{proof}

\section{Stationary random walks on lamplighter graphs}\label{sec:graphs}

The notion of stationary random walks extends naturally to lamplighter graphs $\Gamma' \wr \Gamma$. Specifically, assume that $\Gamma$ and $\Gamma'$ are bounded-degree graphs, where~$\Gamma$ is infinite and $\Gamma'$ is nontrivial. We assume that $\Gamma$ is rooted, so the notion of homesick random walks on $\Gamma$ is defined with respect to the root $o$ of $\Gamma$. Then the ideas of the proofs of \Tref{thm:virt-Z} and \Tref{thm:no-transition} show:
\begin{enumerate}
  \item If the balls around $o$ in $\Gamma$ have linear growth and $\Gamma'$ is finite, then there is a phase transition as in \Tref{thm:virt-Z}.
  \item If the balls around $o$ in $\Gamma$ grow superlinearly, or if $\Gamma'$ is infinite, then the $\lambda$-stationary random walk is transient for every $\lambda>1$.
\end{enumerate}

In contrast to lamplighter groups, bounded-degree graphs can exhibit much wilder growth behaviours. It is therefore natural to expect that if the growth of balls in $\Gamma$ oscillates between linear and superlinear rates, the stationary random walk may display a phase transition, or even remain transient for every value of $\lambda$, depending on the oscillations.

Another contrast between stationary random walks on lamplighter graphs and on lamplighter groups arises in the critical case. While we conjecture that the walk on lamplighter groups is recurrent when the base group is virtually-$\Z$ (\Cjref{conj:conj}), we show that this fails for lamplighter graphs when the base group is not transitive.

\begin{exmpl}
  Let $\Gamma_m=(V_m,E_m)$ denote the graph obtained from $\Z$ by splitting every vertex of distance $2^i$ (for every $i\ge 1$) from $0$ to $m$ vertices. Formally, we write
  \[
    V_m = \left(\Z\setminus\set{\pm 2^i\suchthat i\ge 1}\right) \cup \set{(2^i,j)\suchthat i\ge 1,\;1\le j \le m}.
  \]
  The edges in $\Gamma_m$ are given as follows: each $x\in\Z$ is adjacent to $x-1,x+1$ (unless one of them is split), and each split vertex $(2^i,j)$ is adjacent to $2^i-1,2^i+1$. Then~$\Gamma_m$ is an infinite bounded-degree graph (with maximal degree $m+1$). We further note that
  \[
    \left|B_r(0)\right| = 2r + 2(m-1)\floor{\log_2 r}
  \]
  for every $r\ge 1$. We denote by $\pi\colon\Gamma_m\to\Z$ the natural projection of $\Gamma_m$ to $\Z$, given by $\pi(x)=x$ for every non-split $x\in V_m$ and $\pi(2^i,j)=2^i$.

  We consider the lamplighter graph $\Gamma=(\Z/2\Z)\wr\Gamma_m$. Since the growth of $\Gamma_m$ around $0$ is linear, there exists $1<\lambda_c<\infty$ such that the $\lambda$-stationary random walk is transient for $\lambda<\lambda_c$, and recurrent for $\lambda>\lambda_c$. To describe it explicitly, let $R_n=((L_n(v))_v,Z_n)$ denote the $\lambda$-stationary random walk on $\Gamma$. Then $Z_n$ is recurrent, and we denote by $(\rho_k)_k$ the successive return times to $0$.

  \begin{prop}
    Assume that $m\ge 3$. Then the $\lambda$-stationary random walk $R_n$ on~$\Gamma$ is recurrent for $\lambda>4$, and transient for $\lambda\le 4$.
  \end{prop}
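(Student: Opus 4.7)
For $\lambda = 1$ the walk is the simple random walk and is transient by standard considerations for lamplighter graphs over recurrent bases; we focus on $\lambda > 1$ henceforth. The key structural observation is that the projection $Y_n := \pi(Z_n)$ is itself a birth-death Markov chain on $\Z$, whose transitions coincide with those of the $\lambda$-homesick walk on $\Z$ except at $\pm 2^i$ and $\pm(2^i \pm 1)$, where they can be computed explicitly from the $m$-fold splitting. Conversely, conditional on the $Y$-trajectory, the lift $(Z_n)$ is recovered by choosing, independently and uniformly, a copy $j \in \set{1,\ldots,m}$ at each visit of $Y$ to a split position. Writing $D_i^\pm$ for the number of distinct copies of the split region at $\pm 2^i$ visited up to $\rho_k$ and $T_i^\pm$ for the number of visits of $Y$ to $\pm 2^i$, the identity $|\range(Z_{\rho_k})| = |\range(Y_{\rho_k})| + \sum_{i : 2^i \in \range(Y_{\rho_k})}(D_i^+ - 1) + \sum_{i : -2^i \in \range(Y_{\rho_k})}(D_i^- - 1)$, combined with $\mu^{*(2t)}(\id_H) = 1/2$ (for $t \geq 1$) and \Pref{prop:ret-prob-gen}, reduces the proposition (via \Rref{rem:rec-tran-gen}) to analysing the series $\sum_k \E[2^{-|\range(Z_{\rho_k})|}]$.

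For $\lambda > 4$, we use the crude upper bound $|\range(Z_{\rho_k})| \leq 2R_k + 1 + 2m\ceil{\log_2 R_k}$, where $R_k := \max_{n \leq \rho_k}\dist(0,Z_n)$. Combining \Pref{prop:homesick-range-up} with $|\partial_E B_i| \leq 2m$, and union-bounding over the $k$ i.i.d.\ excursions, we get $R_k \leq (1+\eps)\log_\lambda k$ with probability $\geq 1 - k^{-\eps}$, whence $\Pr(R_{\rho_k} = \id) \gtrsim k^{-2(1+\eps)/\log_2\lambda}(\log k)^{-O(m)}$. Since $\log_2 \lambda > 2$, taking $\eps$ small forces the exponent of $k$ below $1$, and $\sum_k \Pr(R_{\rho_k}=\id)$ diverges, giving recurrence.

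For $1 < \lambda \leq 4$ we need summability. A direct inclusion-exclusion over the uniform copy choices gives $\E[2^{-(D_i^\pm - 1)} \mid T_i^\pm = t] \leq 2^{1-m} + 2(1-1/m)^t$, so once $T_i^\pm \gg \log\log k$ each split factor is $\leq 2^{1-m}(1+o(1))$. We combine this with two concentration inputs: (a) using symmetry and the matching estimate $\Reff(0 \leftrightarrow K_r^+) \asymp \lambda^r/(\lambda-1)$ on the positive half-line, show $|\range(Y_{\rho_k})| \geq 2\log_\lambda k - 2\log_\lambda\log k$ with probability $\geq 1 - O(k^{-3/2})$; and (b) using that the stationary distribution of $Y$ satisfies $\pi_Y(\pm 2^i)/\pi_Y(0) \asymp \lambda^{-2^i}\beta^i$ with $\beta := (\lambda m + 1)/(\lambda + m) > 1$, apply Chernoff to the i.i.d.\ sum $T_i^\pm = \sum_{j \leq k}\xi_Y^{(j)}(\pm 2^i)$ to obtain $T_i^\pm \geq C\log\log k$ uniformly over $i$ with $2^i \leq (1-\delta)\log_\lambda k$. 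Putting these together yields
\[
\E[2^{-|\range(Z_{\rho_k})|}] \lesssim k^{-2/\log_2\lambda}(\log k)^{2/\log_2\lambda - 2(m-1)} + O(k^{-3/2}),
\]
which is summable for $\lambda \leq 4$ and $m \geq 3$: for $\lambda < 4$ already the factor $k^{-2/\log_2\lambda}$ beats $k^{-1}$, while for $\lambda = 4$ one obtains $k^{-1}(\log k)^{3-2m}$ with $3 - 2m \leq -3$.

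The main obstacle is the borderline case $\lambda = 4$, where everything is tight: the lower bound on $|\range(Y_{\rho_k})|$ may only lose $O(\log\log k)$ (so that $2^{-|\range(Y)|}$ is $k^{-1}$ times at most a bounded polylog factor), and each of the $\asymp \log_2\log k$ split regions on each side must contribute essentially the optimal factor $2^{1-m}$. Together these give a gain of $(\log k)^{-2(m-1)}$ against a loss of $(\log k)^{O(1)}$; the hypothesis $m \geq 3$ is exactly what makes the net exponent of $\log k$ strictly less than $-1$.
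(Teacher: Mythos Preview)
Your plan is correct and recovers the right critical bound $k^{-1}(\log k)^{3-2m}$ at $\lambda=4$, matching the paper. The recurrence half ($\lambda>4$) is essentially identical to the paper's argument.

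For transience ($\lambda\le 4$) you take a genuinely different route. You project to the birth--death chain $Y=\pi(Z)$, control the local times $T_i^\pm$ at split positions via the stationary measure of $Y$, and then bound the number $D_i^\pm$ of distinct copies visited through a coupon-collector inequality $\E[2^{-(D_i-1)}\mid T_i=t]\le 2^{1-m}+m(1-1/m)^t$ (your constant $2$ should be $m$, but this is immaterial). The paper bypasses all of this machinery: it fixes each \emph{individual} copy $(\pm 2^i,j)$ with $2^i\le r_k''\approx\tfrac12\log_\lambda k$ and directly bounds the probability that $Z$ misses it during all $k$ excursions by $(1-c\,\lambda^{-2^i})^k\le e^{-c\sqrt k}$, using only the crude estimate $\Reff(0\leftrightarrow(\pm 2^i,j))\le \lambda^{2^i}/(\lambda-1)$. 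A union bound over the $O(m\log\log k)$ copies then gives $D_i^\pm=m$ for all relevant $i$ on a single explicit event of probability $1-o(k^{-2})$, so that $|\range(Z_{\rho_k})|\ge 2r_k'+2(m-1)\log_2 r_k''$ outright. Your approach makes the copy-selection mechanism explicit and would adapt better to variants (e.g.\ non-uniform splitting or finer asymptotics), but for the proposition as stated the paper's argument is shorter and avoids both the stationary-measure computation and the Chernoff step for the unbounded summands $\xi_Y^{(j)}(\pm 2^i)$ --- which, as you have written it, needs the truncation $T_i^\pm\ge\#\{j:\xi_Y^{(j)}\ge 1\}$ (a genuine binomial) to be made rigorous.
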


  \begin{proof}
    Calculating effective resistances as in the proof of \Pref{prop:homesick-range-up} for $\Gamma_m$, we get an equality
    \[
        \Pr\left(\max_{0\le t\le\rho_1}\left|\pi(Z_t)\right|\ge r\right) = \frac{1}{2}\left(\sum_{s=0}^{r-1}\left|\partial_E B_s(0)\right|^{-1}\lambda^s\right)^{-1}.
    \]
    We note that
    \[
        \left|\partial_E B_s(0)\right| = \begin{cases} 2, & s \ne 2^j \text{ for } j\ge 1 \\ 2m, & s=2^j \text{ for }j\ge 1,\end{cases}
    \]
    implying that
    \[
        \frac{\lambda-1}{\lambda^r-1} \le \Pr\left(\max_{0\le t\le\rho_1}\left|\pi(Z_t)\right|\ge r\right) \le \frac{m(\lambda-1)}{\lambda^r-1}.
    \]

    We now prove the claim. Suppose first that $\lambda>4$. For each $k\ge\lambda+1$, since the different excursions are independent, we have
    \[
        \Pr\left(\max_{0\le t\le\rho_k}\left|\pi(Z_t)\right|<r\right) = \Pr\left(\max_{0\le t\le\rho_1}\left|\pi(Z_t)\right|<r\right)^k \ge \left(1-\frac{\lambda-1}{\lambda^r-1}\right)^k.
    \]
    for every $r\ge 1$. Taking $r_k\coloneqq\ceil{\frac{\log k}{\log\lambda}}$, we deduce
    \[
        \Pr\left(\max_{0\le i\le\rho_k}\left|\pi(Z_i)\right|<r_k\right) \ge \left(1-\frac{\lambda-1}{k-1}\right)^k \ge \frac{1}{\lambda^{\lambda+1}}
    \]
    where the last inequality follows from $k\ge\lambda+1$ and monotonicity. We note that if $\max_{0\le t\le\rho_k}\left|\pi(Z_t)\right|<r_k$ then $\range(Z_{\rho_k}) \sub B_{r_k}(0)$, and thus
    \[
        \Pr\left(\left|\range(Z_{\rho_k})\right|\le \left|B_{r_k}(0)\right|\right) \ge \frac{1}{\lambda^{\lambda+1}}.
    \]
    It follows (using \Pref{prop:ret-prob-gen} for graphs) that
    \begin{align*}
      \Pr\left(R_{\rho_k}=\id\right) & \ge \Pr\left(\left|\range(Z_{\rho_k})\right|\le \left|B_{r_k}(0)\right|\right)\Pr\left(R_{\rho_k}=\id \suchthat \left|\range(Z_{\rho_k})\right|\le \left|B_{r_k}(0)\right|\right) \\
      & \ge \frac{1}{\lambda^{\lambda+1}} \cdot 2^{-\left|B_{r_k}(0)\right|} \ge \frac{1}{2\lambda^{\lambda+1}}\cdot \frac{1}{k^{2\log 2/\log\lambda}(\log k)^{2(m-1)}}.
    \end{align*}
    Therefore $\sum_{k=1}^{\infty}\Pr\left(R_{\rho_k}=\id\right)=\infty$ for $\lambda>4$, showing that $R_n$ is recurrent in this case.

    We now suppose that $\lambda\le 4$. Fix $k\ge 3$. We set
    \[
        r_k'\coloneqq\floor{\frac{\log k-\log\log k+\log((\lambda-1)/4)}{\log\lambda}},
    \]
    and define the events
    \[
        A^+_k \coloneqq \set{\max_{0\le t\le\rho_k} \pi(Z_t) \ge r_k'}
    \]
    and
    \[
        A^-_k \coloneqq \set{\min_{0\le t\le\rho_k} \pi(Z_t) \le -r_k'}.
    \]
    At each excursion we have
    \[
        \Pr\left(\max_{0\le t\le\rho_1} \pi(Z_t) \ge r_k'\right) = \frac{1}{2}\Pr\left(\max_{0\le t\le\rho_1} \left|\pi(Z_t)\right| \ge r_k'\right) \ge \frac{\lambda-1}{2\lambda^{r_k'}} \ge \frac{2\log k}{k}.
    \]
    Therefore, since the different excursions are independent,
    \[
        \Pr((A^+_k)^c) = \Pr\left(\max_{0\le t\le\rho_1} \pi(Z_t) < r_k'\right)^k \le \left(1 - \frac{2\log k}{k}\right)^k \le \frac{1}{k^2}.
    \]
    By symmetry, $\Pr((A^-_k)^c)\le\frac{1}{k^2}$ as well.

    Next, we write $r_k''=\floor{\frac{\log k}{2\log\lambda}}$. For each $1\le i\le \log_2 r_k''$ and $1\le j\le m$, we write~$B_{i,j,k}^+$ for the event that $Z_n$ visits $(2^i,j)$ until time $\rho_k$, and $B_{i,j,k}^-$ for the same event for $(-2^i,j)$. At a given excursion we have
    \[
        \Pr\left(\exists t\le\rho_1:Z_t=(2^i,j)\right) \ge \frac{1}{m}\Pr\left(\max_{0\le t\le \rho_1}\pi(Z_t)\ge r_k''\right) \ge \frac{1}{2m}\frac{\lambda-1}{\lambda^{r_k''}-1} \ge \frac{\lambda-1}{2mk^{1/2}}.
    \]
    Therefore
    \[
        \Pr((B_{i,j,k}^+)^c) \le \left(1 - \frac{\lambda-1}{2mk^{1/2}}\right)^k \le e^{-\frac{\lambda-1}{2m}k^{1/2}},
    \]
    and by symmetry
    \[
        \Pr((B_{i,j,k}^-)^c) \le \left(1 - \frac{\lambda-1}{2mk^{1/2}}\right)^k \le e^{-\frac{\lambda-1}{2m}k^{1/2}}.
    \]

    We note that under the event $\tilde{A}\coloneqq A_k^+\cap A_k^-\cap\bigcap_{1\le i\le\log_2 r_k'',1\le j\le m}(B_{i,j,k}^+\cap B_{i,j,k}^-)$ we have
    \[
        \left|\range(Z_{\rho_k})\right| \ge 2r_k' + 2(m-1)\log_2 r_k'' \ge \frac{2\log k}{\log\lambda} + (2m-3)\log_2 \log k.
    \]
    We apply \Pref{prop:ret-prob-gen} for graphs again, and deduce
    \begin{align*}
      \Pr(R_{\rho_k}=\id) & \le \Pr\left(R_{\rho_k}=\id\suchthat\tilde{A}\right) + \Pr((A_k^+)^c) + \Pr((A_k^-)^c) + \sum_{\substack{1\le i\le\log_2 r_k''\\1\le j\le m}}\Pr(B_{i,j,k}^c) \\
      & \le 2^{-(\frac{2\log k}{\log\lambda} + (2m-3)\log_2 \log k)} + \frac{2}{k^2} + m\log_2 r_k''e^{-\frac{\lambda-1}{2m}k^{1/2}} \\
      & \le \frac{1}{k^{-\frac{\log 4}{\log\lambda}}(\log k)^{2m-3}} + \frac{2}{k^2} + m\log_2 \log k e^{-\frac{\lambda-1}{2m}k^{1/2}},
    \end{align*}
    which is summable for $m\ge 3$ given $\lambda\le 4$. Hence $R_n$ is transient for $\lambda\le 4$, concluding the proof.

  \end{proof}
\end{exmpl}

\bibliographystyle{plain}
\bibliography{refs}

\end{document}